\newtheorem{theorem}{Theorem}[section]
\newtheorem{lemma}[theorem]{Lemma}
\newtheorem{cor}[theorem]{Corollary}
\newtheorem{conj}[theorem]{Conjecture}
\theoremstyle{definition}
\newtheorem{prob}[theorem]{Problem}
\newtheorem{defn}[theorem]{Definition}
\newtheorem{claim}[theorem]{Claim}
\newenvironment{poc}{\begin{proof}[Proof of claim]}{\end{proof}}
\newcommand{\C}[1]{{\protect\mathcal{#1}}}
\newcommand{\ex}{\mathrm{ex}}
\newcommand{\SM}[1]{{\color{blue}{SS: #1}}}
\title{Induced even cycles in locally sparse graphs}
\begin{document}

\author{Laihao Ding\thanks{School of Mathematics and Statistics, and Key Laboratory of Nonlinear Analysis \& Applications (Ministry of Education), Central China Normal University, Wuhan, China, and Extremal Combinatorics and Probability Group (ECOPRO),  Institute for Basic Science (IBS), Daejeon, South Korea. Supported by the National Nature Science Foundation of China (11901226), the China Scholarship Council and the Institute for Basic Science (IBS-R029-C4). \textbf{Email address}: dinglaihao@ccnu.edu.cn}
\and Jun Gao\thanks{Extremal Combinatorics and Probability Group (ECOPRO), Institute for Basic Science (IBS),  Daejeon, South Korea. Supported by the Institute for Basic Science (IBS-R029-C4). \textbf{Email address}: jungao@ibs.re.kr, hongliu@ibs.re.kr}
\and Hong Liu\footnotemark[2]
\and Bingyu Luan\thanks{School of Mathematics, Shandong University, Jinan, China, and Extremal Combinatorics and Probability Group (ECOPRO),  Institute for Basic Science (IBS), Daejeon, South Korea. Supported by the China Scholarship Council, the Natural Science Foundation of China (12231018), National Key R\&D Program of China (2023YFA1009603), and the Institute for Basic Science (IBS-R029-C4). \textbf{Email address}: byluan@mail.sdu.edu.cn}
\and Shumin Sun\thanks{Mathematics Institute, University of Warwick, Coventry, UK, and Extremal Combinatorics and Probability Group (ECOPRO),  Institute for Basic Science (IBS), Daejeon, South Korea. Supported by ERC Advanced Grant 101020255 and the Institute for Basic Science (IBS-R029-C4). \textbf{Email address}: Shumin.Sun@warwick.ac.uk}
}
\date{ }
\maketitle
\begin{abstract}
     A graph $G$ is \textit{$(c,t)$-sparse} if for every pair of vertex subsets $A,B\subset V(G)$ with $|A|,|B|\geq t$, $e(A,B)\leq (1-c)|A||B|$. In this paper we prove that for every $c>0$ and integer $\ell$, there exists $C>1$ such that if an $n$-vertex graph $G$ is $(c,t)$-sparse for some $t$, and has at least $C t^{1-1/\ell}n^{1+1/\ell}$ edges, then $G$ contains an induced copy of $C_{2\ell}$. This resolves a conjecture of Fox, Nenadov and Pham.
\end{abstract}

\section{Introduction}

For a graph $H$, its \emph{Tur{\'a}n number}, denoted by $\ex(n,H)$, is the maximum number of edges in an $n$-vertex graph which does not contain a copy of $H$ as a subgraph.
The Tur{\'a}n type problem is one of the most fundamental topics in extremal combinatorics, with its origin dating back to Mantel in 1907, who solved the case when forbidding triangles. This was later generalised to all cliques by Tur{\'a}n in 1941.
More generally, the celebrated Erd\H{o}s-Stone-Simonovits theorem \cite{erdos1966limit,erdos1946structure} states that $\ex(n,H)=\left(1-\frac{1}{\chi(H)-1}+o(1)\right)\binom{n}{2}$, where $\chi(H)$ is the chromatic number of $H$. This theorem asymptotically solves the Tur{\'a}n problem for non-bipartite graphs. However, when $H$ is bipartite, our knowledge about $\ex(n,H)$ is rather limited. Even for complete bipartite graphs and even cycles, the determination of the order of the magnitudes of their Tur{\'a}n numbers turns out to be notoriously difficult. Let $C_{2\ell}$ be the cycle of length $2\ell$. While the classical upper bound $\ex(n,C_{2\ell})=O_\ell(n^{1+1/\ell})$ has been proved by Bondy and Simonovits~\cite{74BS} fifty years ago, the matching lower bounds are known to exist only for $C_4$, $C_6$ and $C_{10}$, see~\cite{66Brown,66ERS,91W}. Whether $\ex (n,C_{2\ell})=\Omega_\ell(n^{1+1/\ell})$ holds for $\ell \notin \{2,3,5\}$ has remained a long-standing open problem. For complete bipartite graphs $K_{s,t}$ with $s\le t$, the well-known K\H{o}v{\'a}ri-S{\'o}s-Tur{\'a}n theorem~\cite{kHovari1954problem} states that ${\rm ex}(n,K_{s,t})=O_{s,t}(n^{2-1/s})$. This bound is only known to be sharp when $s\in\{2,3\}$ or $t$ is sufficiently large as a function of $s$ \cite{alon1999norm,bukh2024extremal,kollar1996norm}.

In recent years, induced variants of the Tur\'an problem have gained much attention. Note that the maximum number of edges of an $n$-vertex graph is trivially $\binom{n}{2}$ when the forbidden induced graph is not a clique. So simply prohibiting induced copies of a specific graph is not very meaningful. Loh, Tait, Timmons and Zhou~\cite{loh2018induced} introduced the question of determining the maximum number of edges in an $n$-vertex graph forbidding simultaneously an induced copy of $H$ and a (not necessarily induced) copy of some graph $F$. For a fixed $H$, denote by $\C P_H$ the graph property of not containing $H$ as an induced subgraph. Let $\ex(n,\C P_H,F)$ be the maximum number of edges in an $n$-vertex member of $\C P_H$ which contains no copy of $F$. Concerning this function, a variety of papers, trying to determine $\ex(n,\C P_H,K_{t,t})$ for different graphs $H$, have appeared recently, see~\cite{24AZ,24BBCD,24GH,24HMST,22BBPRTW,04KO}. In particular, Hunter, Milojevi\'c, Sudakov and Tomon~\cite{24HMST} established the upper bound $\ex(n,\C P_{C_{2\ell}},K_{t,t})\le C_{t,\ell}n^{1+1/\ell}$ for $\ell\geq2$.

In general, we say that a graph property is \emph{hereditary} if it is closed under taking induced subgraphs. Throughout the paper, we assume that  every property $\mathcal{P}$ is non-trivial, namely it contains all edgeless graphs and misses some graph. For a hereditary property $\mathcal{P}$ and a graph $\Gamma$, let ${\rm ex}(\Gamma,\mathcal{P})$ denote the maximum number of edges of a subgraph $G\subseteq \Gamma$ that belongs to $\mathcal{P}$. For instance, if $\Gamma$ is the complete graph $K_n$ and $\mathcal{P}$ is the property of not containing $H$ as a subgraph, then ${\rm ex}(\Gamma,\mathcal{P})$ is the same as ${\rm ex}(n,H)$. Generalising Erd\H{o}s-Stone-Simonovits theorem of the classical Tur{\'a}n function, Alon, Krivelevich and Samotij~\cite{23NKS} proved that for every non-trivial hereditary property $\C P$ and the binomial random graph $G(n,p)$ with fixed $0<p<1$,
$\ex(G(n,p),\mathcal{P})=\left(1-\frac{1}{k-1}+o(1)\right)p\binom{n}{2}$ holds with high probability, where $k=k(\mathcal{P})\geq 2$ denotes the minimum chromatic number of a graph that does not belong to $\mathcal{P}$. Note that in their result, if $\mathcal{P}$ misses a bipartite graph, then ${\rm ex}(\Gamma,\mathcal{P})=o(n^2)$. 

In this paper, we focus on a larger class of graphs $\Gamma$, proposed by Fox, Nenadov and Pham \cite{fox2024largest}. A graph $G$ is \textit{$(c,t)$-sparse} if for every pair of vertex subsets $A,B\subset V(G)$ (not necessarily disjoint) with $|A|,|B|\geq t$, we have $e(A,B)\leq (1-c)|A||B|$. Here, $e(A,B)$ is the number of pairs $(a,b)\in A\times B$ such that $\{a,b\}$ is an edge in $G$. In particular, every edge lying in $A\cap B$ is double-counted. Trivially, if a graph is $(c,t)$-sparse, then so do all of its subgraphs. By averaging, it is easy to see that a graph is $K_{t,t}$-free if and only if it is $(1/t^2,t)$-sparse.
Furthermore, standard concentration inequalities infer that for $0<p<1$, with high probability, the binomial random graph $G(n,p)$ is $(c,t)$-sparse if $c<1-p$ and $t=\Omega_c(\log n)$. Hence, the investigation of $(c,t)$-sparse graphs encompasses the study of the above two induced Tur\'an variants when the host graphs are $K_{t,t}$-free graphs or dense random graphs.

Fox, Nenadov and Pham~\cite{fox2024largest} initiated the systematic study of $\ex (\Gamma, \C P_H)$ with $\Gamma$ being a $(c,t)$-sparse graph. A bipartite graph $H=(A\cup B,E)$ is \textit{$d$-bounded} if there are $1\leq \ell\leq d$ vertices in $A$ complete to $B$, and all other vertices in $A$ have degree at most $d$.  They proved that, for a $d$-bounded bipartite $H$, $\ex(\Gamma, \C P_H)\le C_{c,H}t^{1/d}n^{2-1/d}$. Also, they obtained a result for a fixed tree $T$, proving that $\ex(\Gamma, \C P_T)\le C_{c,T}tn$. In their paper, they proposed the following conjecture for even cycles.

\begin{conj}\label{conj}
    For every $c>0$ and integer $\ell$, there exists $C>1$, such that if $\Gamma$ is a $(c,t)$-sparse graph, then $\ex(\Gamma,\C P_{C_{2\ell}})\le C t^{1-1/\ell}n^{1+1/\ell}.$
\end{conj}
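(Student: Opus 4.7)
The plan is to follow the Bondy--Simonovits path-counting paradigm, using $(c,t)$-sparsity precisely where one usually needs only forbidden-subgraph counts. First, by iteratively deleting vertices of degree less than $c_1 t^{1-1/\ell}n^{1/\ell}$ for a suitable $c_1 = c_1(c,\ell,C)$, one may pass to a subgraph $G' \subseteq G$ on $\Omega(n)$ vertices with minimum degree $\delta \geq c_1 t^{1-1/\ell}n^{1/\ell}$; the subgraph $G'$ is still $(c,t)$-sparse. The number of length-$\ell$ walks in $G'$ is then at least $n\delta^\ell \geq c_1^\ell t^{\ell-1} n^2$ by convexity, and after subtracting the lower-order contribution of non-self-avoiding walks, some pair $(u,v)$ is joined by at least $M := c_2 t^{\ell-1}$ distinct length-$\ell$ paths.

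Second, I would arrange that $uv \notin E(G)$. Applying $(c,t)$-sparsity to each neighborhood $N(x)$ with $|N(x)| \geq t$ yields that at most a $(1-c)$-fraction of cherries $u$--$x$--$v$ have $uv \in E(G)$; iterating along the $\ell-1$ internal vertices of a length-$\ell$ path, only a bounded fraction of such paths have adjacent endpoints, so a constant proportion have non-adjacent endpoints. Passing to such a pair avoids the $uv$-chord automatically.

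Third, and decisively, one must extract from the resulting collection $\mathcal{P}$ of $M$ length-$\ell$ $uv$-paths two paths $P,Q$ whose union is an induced $C_{2\ell}$ in $G$. Writing each path as $u = x_0,x_1,\dots,x_\ell = v$ and letting $A_i$ be the set of $i$-th internal vertices appearing in $\mathcal{P}$, a layered pigeonhole produces a sub-family $\mathcal{P}' \subseteq \mathcal{P}$ which is \emph{$t$-rich}: every $A_i$ has size at least $t$, and each vertex of $A_i$ extends to $\Omega(t^{\ell-i})$ continuations toward $v$ (and symmetrically toward $u$). I then build the two paths layer by layer: at round $i$, choose $(x_i,y_i) \in A_i \times A_i$ so that $x_i,y_i$ extend the already-chosen prefixes, $x_i \neq y_i$, $x_i y_i \notin E(G)$, and neither $x_i$ nor $y_i$ is adjacent to any previously chosen vertex at a chord position. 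By $(c,t)$-sparsity applied to $A_i$ itself and to each pair $(A_i,A_j)$ with $|i-j|\geq 2$, every forbidden-adjacency constraint rules out at most a $(1-c)$-fraction of candidates, and $t$-richness keeps the surviving set above $t$ so that sparsity remains applicable throughout the $\ell-1$ rounds.

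The main obstacle is precisely this greedy extraction: one must simultaneously enforce $O(\ell^2)$ chord-avoidance constraints (inter-path, intra-path, and to the fixed endpoints), keep each candidate set above the sparsity threshold $t$, and retain enough extendability to later layers at every round. The initial reservoir of $t^{\ell-1}$ paths provides exactly the budget needed for $\ell-1$ rounds of $t$-scale sparsity applications; choosing $C$ large enough absorbs the $(1-c)^{O(\ell^2)}$ factor losses and makes the two scales match. The calibration of these two scales, together with the careful ordering of non-adjacency constraints so that each application of sparsity is performed on a set of size at least $t$, is the heart of the argument.
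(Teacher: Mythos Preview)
Your layered greedy extraction in the third step has a genuine gap. The $(c,t)$-sparsity hypothesis only bounds $e(A,B)$ when \emph{both} $|A|,|B|\geq t$; it says nothing about how many neighbours a single fixed vertex can have inside a set of size $t$. Once $y_1,\dots,y_{i-1}$ have been fixed, the constraint ``$x_i y_j\notin E(G)$'' is a condition on $x_i$ against the singleton $\{y_j\}$, and sparsity is silent there: one previously chosen $y_j$ may be adjacent to every remaining candidate for $x_i$, and the process stalls. Your sentence ``every forbidden-adjacency constraint rules out at most a $(1-c)$-fraction of candidates'' is thus unjustified---the loss is not a $(1-c)$-factor but potentially everything. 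Recasting the argument probabilistically does not help either, because two random paths from $\mathcal{P}'$ do not give $(x_i,y_j)$ uniform in $A_i\times A_j$, so the density bound $e(A_i,A_j)\leq(1-c)|A_i||A_j|$ does not translate into $\Pr[x_iy_j\in E]\leq 1-c$. The same issue undermines your second step: for $\ell\geq 3$ the endpoints $u,v$ are not both in $N(x)$ for any single internal vertex $x$, so the cherry count does not control the fraction of length-$\ell$ paths with adjacent endpoints.

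The paper supplies two ingredients you are missing. First, a bootstrapping lemma: if $G$ is $(c,t)$-sparse and contains no induced $C_{2\ell}$, then $G$ is actually $(1-\varepsilon,\beta t)$-sparse for any prescribed small $\varepsilon$ and a suitable $\beta=\beta(c,\ell,\varepsilon)$. This upgrades ``density $\leq 1-c$'' (possibly near $1$) to ``density $\leq\varepsilon$'' between large sets, and only with this strengthened sparsity do counting and averaging arguments become usable. Second, the paper never tries to avoid a single fixed vertex; instead it produces, for each relevant endpoint $y$, a family of $\Omega(t)$ internally vertex-disjoint $(x,y)$-paths and applies $(1-\varepsilon,\beta t)$-sparsity to the \emph{union} of their internal vertex sets (a set of size $\geq\beta t$), so that the hypothesis is always applicable. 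Organising the paths so that such disjoint systems exist requires a recursive good/admissible path framework (paths whose subpaths are not overly concentrated between any pair of vertices) rather than a single pigeonhole on endpoints; your claimed $t$-richness is not automatic, since all $t^{\ell-1}$ paths could in principle share a common first internal vertex.
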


This conjecture can be viewed as an induced analogue of the classical Bondy-Simonovits theorem. In this paper, we confirm \Cref{conj} by proving the following theorem.

%\SM{We generally use $\ell$ instead of $l$. I have changed this in the subsequent.}

%\GJ{The introduction needs to be revised}

\begin{theorem}\label{main}
    For every $c>0$ and integer $\ell$, there exists $C>1$ such that if an $n$-vertex graph $G$ is $(c,t)$-sparse for some $t$, and has at least $C t^{1-1/\ell}n^{1+1/\ell}$ edges, then $G$ contains an induced copy of $C_{2\ell}$.
\end{theorem}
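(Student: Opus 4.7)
The plan is to adapt the classical Bondy--Simonovits approach for finding $C_{2\ell}$ to the induced setting, using $(c,t)$-sparseness to control chord edges. The guiding observation is that paths respecting a BFS layer structure from a vertex $s$ are automatically induced: any chord $v_iv_j$ with $|i-j|\ge 2$ would shorten the BFS distance from $s$ to $v_j$. It therefore suffices to find two internally vertex-disjoint BFS paths of length $\ell$ from $s$ to a common endpoint $w$ whose union has no ``cross-edges'' between the internal vertices.

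First, I would apply a standard regularization (Erd\H{o}s--Simonovits style cleanup) to pass to a bipartite subgraph $H \subseteq G$ with minimum degree $d = \Omega_c(C)\cdot t^{1-1/\ell} n^{1/\ell}$, with the $(c,t)$-sparseness inherited by $H$. In bipartite $H$, BFS layers $L_0 = \{s\}, L_1, L_2, \ldots$ strictly alternate between the two parts, so the only potential edges are between $L_i$ and $L_{i+1}$. I would then fix an appropriate $s$, build BFS layers up to depth $\ell$, and iteratively prune vertices with few back-neighbours to ensure every $L_i$ has size $\gg t$ and every vertex in $L_i$ has many neighbours in $L_{i-1}$. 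A Bondy--Simonovits-style path-counting argument then produces a common endpoint $w \in L_\ell$ together with $\gg t^{\ell-1}$ BFS paths of length $\ell$ from $s$ to $w$.

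It remains to extract two such paths $P_1 = s a_1 \cdots a_{\ell-1} w$ and $P_2 = s b_1 \cdots b_{\ell-1} w$ whose union is an induced $C_{2\ell}$. By the bipartite BFS structure, each path is automatically induced and pairs $(a_i, b_j)$ with $|i-j|\ne 1$ are automatically non-edges; only the cross-pairs $(a_i, b_{i+1})$ and $(a_{i+1}, b_i)$ for $i \in \{0, \ldots, \ell-1\}$ carry genuine constraints (with the boundary pairs involving $s$ or $w$ being the required cycle edges). The $(c,t)$-sparseness gives that each bipartite graph $(L_i, L_{i+1})$ has edge density at most $1-c$ when both layers have size at least $t$, so most pairs of layer vertices across consecutive layers are non-adjacent. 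I expect to extract the desired pair via a weighted counting argument, where each partial pair of path-prefixes up to layer $i$ is assigned a weight equal to the number of valid completions to layer $\ell$, and $(c,t)$-sparseness is applied layer by layer to control the fraction of such partial pairs that are forced to create a forbidden cross-edge.

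\textbf{Main obstacle.} The principal difficulty is the last step: balancing the ``many extensions'' requirement (which needs neighbours into the next layer) with the ``no cross-edges'' requirement (which needs non-neighbours between layers) simultaneously. A direct union bound over the $O(\ell)$ potential cross-edge locations is hopeless, since each contributes a $(1-c)$ fraction of forbidden pairs and so the bound exceeds one after a constant number of layers. The correct argument will likely propagate a carefully chosen weight (perhaps an $L^2$-type quantity suitable for Cauchy--Schwarz) through the layers, exploiting $(c,t)$-sparseness not only as a uniform density bound but also through its consequence that for any large set $S$, a positive fraction of vertices retain $\Omega(c)|S|$ non-neighbours in $S$. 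Designing the weights and the iterative bookkeeping so that a positive weighted count of valid path-pairs survives to layer $\ell$ is the main technical challenge.
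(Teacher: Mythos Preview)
Your plan has one outright bug and is missing the idea that makes the induced problem tractable.

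\textbf{Passing to a bipartite subgraph destroys the induced conclusion.} If $H$ is a bipartite subgraph of $G$, a cycle that is induced in $H$ need not be induced in $G$: the within-part edges of $G$ you discarded when forming $H$ may reappear as chords. Your assertion that ``pairs $(a_i,b_j)$ with $|i-j|\ne 1$ are automatically non-edges'' holds only in $H$; in $G$ the pairs $(a_i,b_i)$ lie in the same part and can very well be edges. You could drop the bipartite reduction and run BFS directly in $G$ (geodesics are still induced paths), but then you must also forbid all the within-layer edges $a_ib_i$ on top of the consecutive-layer cross edges $a_ib_{i\pm 1}$, so the obstacle you already flag gets worse, not better.

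\textbf{The missing key idea: bootstrapping the sparseness.} The obstacle you correctly isolate---that a union bound over $O(\ell)$ forbidden edge locations, each of density $1-c$, fails as soon as $\ell(1-c)>1$---is exactly what the paper overcomes, but by a mechanism unrelated to your weighted-propagation sketch. The paper's \Cref{coro2} shows that a $(c,t)$-sparse graph with no induced $C_{2\ell}$ is automatically $(1-\varepsilon,\beta t)$-sparse for any prescribed small $\varepsilon>0$ (with $\beta=\beta(c,\ell,\varepsilon)$); this follows by feeding any putative dense pair into the induced Tur\'an bound for bounded-degree bipartite graphs (\Cref{thm-original}). Once the pairwise density between large sets is at most $\varepsilon\ll 1/\ell$ rather than $1-c$, a plain union bound over the $O(\ell)$ chord constraints succeeds. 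Without this bootstrapping, your weighted argument has to recover a loss that is exponential in $\ell$, and nothing in the outline explains how.

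\textbf{How the paper actually proceeds.} With the bootstrapped $(1-\varepsilon,\beta t)$-sparseness in hand, the paper does not use BFS at all. It passes to an almost-regular induced subgraph, counts induced $\ell$-paths directly (\Cref{pathnumber}), and controls them via a good/admissible-path hierarchy (\Cref{good}): a path is \emph{good} if at most $(Lt)^{s-1}$ admissible paths share its endpoints. If some $2\le s\le \ell$ admits many bad admissible $s$-paths, the key \Cref{lem: main} extracts an induced $C_{2\ell}$---via dependent random choice when $s=2$, and via an auxiliary-graph construction plus \Cref{thm:fnp} when $s\ge 3$---with the bootstrapped sparseness used repeatedly to kill unwanted edges. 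Your BFS idea, repaired to work in $G$ rather than a bipartite subgraph and combined with \Cref{coro2}, might conceivably be developed into an alternative proof, but as written it is not one.
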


Our theorem generalises the result of Hunter, Milojevi\'c, Sudakov and Tomon~\cite[Theorem 1.4]{24HMST} on induced Tur{\'a}n problem for even cycles from $K_{t,t}$-free graphs to all $(c,t)$-sparse graphs. Our bound is best possible up to the constant $C$. In particular, the exponent of $t$ cannot be improved in general. Indeed, 
let $G$ be the disjoint union of cliques of size $(1-c)t$, then it is $(c,t)$-sparse, contains no induced $C_{2\ell}$ and has $\Omega(n^2)=\Omega(t^{1-1/\ell}n^{1+1/\ell})$ edges when $t=\Omega(n)$. 
\medskip

\noindent \textbf{Our approach.} Our proof primarily employs a concept of `good/admissible path' (see Definition \ref{good}) which helps with controlling the number of induced paths between given endpoints. The bulk of our proof is to establish the key lemma, Lemma~\ref{lem: main}. Roughly speaking, \Cref{lem: main} states that if an almost regular and locally sparse graph has many `minimally congested' (bad admissible) induced $s$-paths with $3\le s\le \ell$ ($\mathcal{F}'$ in~\Cref{cl:paths}) starting from some same vertex $x$, then one can utilise them to build an induced even cycle. If we do not insist on the embedding to be induced, the proof of Lemma~\ref{lem: main} would be quite straightforward. However, the inducedness requirement makes the embedding procedure much more delicate and involved. We first show that there are many internally disjoint $s$-paths in $\mathcal{F}'$ with the same endpoints ($\{\mathcal{P}_y\}_{y\in Y}$ in Claim \ref{cl:max-disj-path}) and there are many edges that can be served as the last edge for these $s$-paths in $\mathcal{F}'$ ($H$ in Claim \ref{lm:H-edge}). We carry out the embedding of an induced $C_{2\ell}$ in two stages. Firstly, we embed in $H$, these last edges of the $s$-paths, an induced $(2\ell-2s)$-path $P$ between some $z_1,z_m\in Y$ (see Section \ref{Gamma} and Claim \ref{lm:2l-2s}). We then utilise the local sparseness of $G$ to find induced $s$-paths from $x$ to $z_1$ and to $z_m$ to form an induced $C_{2\ell}$ with $P$ (Claim \ref{lm:glue}). This embedding strategy breaks down when $s=2$ as $x$ necessarily is adjacent to some internal vertices of $P$. We instead use the dependent random choice method to construct an induced $C_{2\ell}$ in the last edges of these bad admissible $2$-paths. In order to proceed the two strategies above, a key ingredient is Lemma \ref{coro2}: it bootstraps local sparseness of a graph in \emph{any} hereditary family missing a bipartite graph, which we believe will be useful for other induced bipartite Tur\'an problems. 
\medskip

\noindent \textbf{Organisation.} This paper is organised as follows. Section 2 introduces some auxiliary results. In Section 3, we prove Theorem~\ref{main} assuming the key lemma,~\Cref{lem: main}. The proof of~\Cref{lem: main} is split into two cases, given in Section 4 and Section 5 respectively. Concluding remarks are given in Section 6.

\section{Preliminaries}
\noindent $\mathbf{Notation.}$ Let $G$ be a graph. We write $v(G)$ and $e(G)$ for the order and the number of edges of $G$ respectively. By $\delta(G)$, $d(G)$ and $\Delta(G)$, we denote the minimum, average and maximum degree of $G$ respectively. For vertices $u$, $v\in V(G)$ and $A\subseteq V(G)$, denote by $N_G(u)$ the neighbourhood of $u$ and $N_A(u)=N_G(u)\cap A$, and write $N_G(u,v)=N_G(u)\cap N_G(v)$ for the \textit{common neighbourhood} of $u,v$ and $N_A(u,v)=N_G(u,v)\cap A$. Define $d_G(u):=|N_G(u)|$ as the \textit{degree} of $u$ in $G$, and similarly, we define $d_A(u)$. For $W\subseteq V(G)$, the induced subgraph of $G$ on $W$ is denoted by $G[W]$. For a collection $\mathcal{G}$ of graphs, we write $V(\mathcal{G})=\bigcup_{H\in \mathcal{G}}V(H)$. 
The length of a path is the number of edges of the path, and a path is called an \textit{$s$-path} if it is of length $s$. We call a path with two vertices $u,v$ as endpoints a \textit{$(u,v)$-path}. 
%For a path $P$ and two vertices $x,y\in V(P)$, denote by $P[x,y]$ the $(x,y)$-subpath of $P$. 
The \emph{internal set} of a $(u,v)$-path $P$ is the set $V(P)\setminus\{u,v\}$, and two paths are \emph{internally vertex-disjoint} if their internal sets are disjoint. We write  $x\ll y\ll z$ to mean that we can choose the positive constants  $x, y, z$ from right to left. More precisely, there are increasing functions
$f$ and $g$ such that, given $z$, whenever we choose some $y\le  f(z)$ and $x\le g(y)$, the subsequent
statement holds. Hierarchies of other lengths are defined similarly.

We need a result of Fox, Nenadov and Pham \cite{fox2024largest} for bounded bipartite graphs.

\begin{cor}[\cite{fox2024largest}]\label{thm-original}
    Let $H$ be a bipartite graph with all vertices in one part having degree at most $d$. If $\Gamma$ is a $(c,t)$-sparse $n$-vertex graph, for some $c>0$ and $t$, then there exists $C=C(c,H)>0$ such that any subgraph $G\subseteq \Gamma$ with at least $Ct^{1/d}n^{2-1/d}$ edges contains a copy of $H$ which is induced in $\Gamma$.
\end{cor}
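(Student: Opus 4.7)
My approach is dependent random choice (DRC) combined with the $(c,t)$-sparseness of $\Gamma$. Let $A\cup B$ be the bipartition of $H$, with every vertex of $A$ of degree at most $d$, and set $a:=|A|$, $b:=|B|$ (both constants depending only on $H$).

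First, pass to $G'\subseteq G$ with $\delta(G')\geq D/2$ where $D:=C_0 t^{1/d}n^{1-1/d}$, by iteratively deleting vertices of $G$-degree below $D/2$; this preserves at least half the edges of $G$. Next, perform DRC: sample a uniformly random $d$-tuple $T\subseteq V(G')$ and set $U:=N_{G'}(T)$. By convexity, $\mathbb{E}|U|\geq D^d/n^{d-1}\gtrsim C_0^{d}\,t$, and a routine ``bad subset'' count shows that with positive probability one can fix $U$ of size $\Omega(t)$ in which every $d$-subset of $U$ has common $G'$-neighborhood of size at least $M$, for a suitably large constant $M=M(c,H)$.

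Now embed $H$ into $U$. Since $|U|\geq t$, applying $(c,t)$-sparseness with $X=Y=U$ gives average $\Gamma$-degree in $\Gamma[U]$ at most $(1-c)|U|$; iteratively picking a minimum-$\Gamma$-degree vertex and restricting to its $\Gamma$-non-neighborhood then produces an independent set $B^*\subseteq U$ in $\Gamma$ of size $b$, and we set $\phi(B)=B^*$. Embed $A$ one vertex at a time: for $u\in A$ with $S_u:=N_H(u)\subseteq B$ of size $\leq d$, the pool $W_u:=N_{G'}(\phi(S_u))$ has size $\geq M$ by the DRC property, and we must pick $\phi(u)\in W_u$ which is $\Gamma$-non-adjacent to $Y_u:=\phi(B\setminus S_u)\cup\phi(A_{\mathrm{prev}})$. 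Edges of $H$ are then realised by edges of $G'\subseteq G$, and non-edges of $H$ by non-edges of $\Gamma$, producing a copy of $H$ in $G$ induced in $\Gamma$.

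\textbf{Main obstacle.} Certifying the existence of a valid $\phi(u)$ at each step is the technical core. Since $|Y_u|\leq a+b$ is a constant that is typically well below the sparseness threshold $t$, one cannot apply $(c,t)$-sparseness directly to the pair $(W_u,Y_u)$. The remedy is twofold: first, preprocess $U$ so that every vertex has uniformly controlled $\Gamma$-density into $U$, by iteratively removing the at most $t$ vertices — guaranteed by $(c,t)$-sparseness — whose $\Gamma$-degree into the current refinement exceeds a chosen threshold; second, strengthen the DRC so that each $d$-subset of the refined $U$ has common $G'$-neighborhood of \emph{proportional} size $\Omega(|U|)$ rather than merely $\geq M$. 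A careful counting argument — finer than a naive union bound, exploiting that $v(H)$ is fixed — then yields a valid $\phi(u)$ at each step. Balancing the DRC parameters, the refinement thresholds and the sizes $a,b$ is where the bulk of the work lies, and determines the constant $C=C(c,H)$ in the conclusion.
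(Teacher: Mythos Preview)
This corollary is not proved in the paper; it is quoted from \cite{fox2024largest} and used as a black box (indeed, Lemma~\ref{coro2} is derived from it). So there is no proof in the present paper to compare against.

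On the merits of your plan: the DRC framework is the right one, and you have correctly located the crux. But neither remedy, as stated, closes the gap. Preprocessing $U$ so that vertices of $U$ have bounded $\Gamma$-degree \emph{into $U$} only helps if the pools $W_u$ lie inside $U$; they need not, since $W_u=N_{G'}(\phi(S_u))$ is a common $G'$-neighbourhood of vertices of $U$ and can lie anywhere in $V(G')$. Your proposed strengthening of DRC to force $|W_u|=\Omega(|U|)$ is also not obtainable from the standard calculation: with $|U|=\Theta(C_0^{d}t)$, the expected number of $d$-subsets with common neighbourhood below $\alpha|U|$ is of order $(\alpha|U|)^{d}$, far too many to clean up by deleting one vertex per bad set. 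Even granting both remedies, the arithmetic still fails: a single previously-placed vertex may be $\Gamma$-adjacent to a $(1-c)$-fraction of $W_u$, so once $v(H)>1/c$ the union of their $\Gamma$-neighbourhoods can cover $W_u$ entirely; and you cannot boost $c$ toward $1$ via Lemma~\ref{coro2}, since that lemma is itself a consequence of the corollary you are proving.

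The missing ingredient is Lemma~\ref{independent}. Rather than embedding $A$ greedily, one should sample the images of all of $V(H)$ in a single coupled random experiment and bound the probability that the outcome is $\Gamma$-independent. The key structural point is that for a non-edge $uv_j$ of $H$ with $u\in A$ and $v_j\in B\setminus N_H(u)$, the pool $W_u$ depends only on $\{\phi(v_{j'}):j'\in N_H(u)\}$ and is therefore independent of the random choice of $\phi(v_j)\in U$; since both $U$ and $W_u$ have size at least $t$, $(c,t)$-sparseness applied to this pair bounds the probability that a uniform $\phi(v_j)\in U$ and a uniform $\phi(u)\in W_u$ are $\Gamma$-adjacent by $1-c$. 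An inductive argument in the style of Lemma~\ref{independent} then handles all non-edges simultaneously. Replacing your greedy extension by such a joint random embedding is what turns the sketch into a proof.
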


A graph $G$ is $K$-$almost$-$regular$ if $\Delta(G)\leq K\delta(G)$. The following lemma allows us to consider only almost regular graphs.

\begin{lemma}[\cite{24HMST}]\label{almostregular}
    Let $\alpha>0$ be a fixed real number and let $K:=2^{3/\alpha +4}$. Furthermore, let $G$ be an $n$-vertex graph with at least $Cn^{1+\alpha}$ edges. Then, there exists a $K$-almost-regular induced subgraph $H\subseteq G$ on $m$ vertices with at least $\frac{C}{4}m^{1+\alpha}$ edges.
\end{lemma}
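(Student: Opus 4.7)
The plan follows a standard Erd\H{o}s--Simonovits dyadic bucketing strategy. Set $d := 2e(G)/n \geq 2Cn^{\alpha}$. The first move is to remove low-degree vertices: those $v$ with $d_G(v) < d/8$ are incident to at most $nd/8 = e(G)/4$ edges in total, so deleting them yields an induced subgraph $G_1$ on $n_1 \leq n$ vertices with $e(G_1) \geq (3/4)\,e(G) \geq (3C/4)\,n_1^{1+\alpha}$, such that every vertex of $G_1$ has $G$-degree at least $d/8$. This standard ``low-tail'' pruning is routine and serves only to cut off the degree range from below.

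Next, I would partition the vertices of $G_1$ into dyadic degree classes $V_k := \{v : d_G(v) \in [2^k, 2^{k+1})\}$ for $k \geq \log_2(d/8)$; only $\log_2(8n/d) \leq \log_2 n$ classes are non-empty. Group these classes into consecutive \emph{windows} of width $L := \log_2 K = 3/\alpha + 4$. Within the union $W$ of any $L$ consecutive dyadic classes, all $G$-degrees lie in an interval $[2^a, 2^{a+L})$, so the maximum-to-minimum $G$-degree ratio is at most $2^L = K$; the induced subgraph $G_1[W]$ therefore satisfies $\Delta(G_1[W]) \leq K \cdot \delta(G_1[W])$ after discarding any vertex that ended up isolated. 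A counting argument --- assigning each edge to the window containing its higher-degree endpoint and then applying pigeonhole over the $O(\log n / L)$ disjoint windows --- would yield a window $W^\ast$ carrying a large share of the edges of $G_1$.

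The main obstacle is the edge-density check $e(G_1[W^\ast]) \geq (C/4)\,|W^\ast|^{1+\alpha}$. Naive pigeonhole over $O(\log n)$ windows costs a logarithmic factor in the edge count, which looks fatal. The crucial observation is that $|W^\ast|$ is also substantially smaller than $n$, so the $|W^\ast|^{1+\alpha}$ denominator works in our favour and compensates for the edge loss. Balancing these two effects is what dictates the precise value of $L$: too small and the degree ratio is fine but pigeonhole eats the edge density; too large and $K$ blows up. The choice $L = 3/\alpha + 4$ arises as the smallest window width that makes the edge-density inequality close up with the claimed constant $C/4$. After a possible short clean-up (e.g., a single pass of low-degree vertex deletion inside $G_1[W^\ast]$ to restore $\delta$ exactly), one obtains the required $K$-almost-regular induced subgraph on $m$ vertices with at least $(C/4)\,m^{1+\alpha}$ edges.
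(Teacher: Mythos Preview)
The paper does not prove this lemma; it is quoted from \cite{24HMST} without argument, so there is nothing in the present paper to compare your proposal against directly.

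Judged on its own, your plan has a genuine gap. You assert that for a window $W=V_a\cup\cdots\cup V_{a+L-1}$ the induced subgraph $G_1[W]$ is $K$-almost-regular because all $G$-degrees of vertices in $W$ lie in $[2^a,2^{a+L})$. But almost-regularity of $G_1[W]$ is about degrees \emph{inside} $G_1[W]$, not $G$-degrees: a vertex $v\in W$ may have almost all its neighbours outside $W$, so $d_{G_1[W]}(v)$ can be $1$ while $d_G(v)\ge 2^a$; discarding isolated vertices or one pass of low-degree deletion does not repair this. The same issue undermines the pigeonhole step: assigning each edge to the window of its higher-degree endpoint counts edges \emph{incident} to that window, not edges \emph{within} it. Concretely, take $G$ complete bipartite between a small high-degree part $X$ and a large low-degree part $Y$; the windows separate $X$ from $Y$, every edge is assigned to the $X$-window, yet $G_1[W]$ is edgeless for every $W$. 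Your ``$|W^\ast|$ is smaller so $|W^\ast|^{1+\alpha}$ helps'' compensation never engages because you have produced no edges inside $W^\ast$.

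The standard (Erd\H{o}s--Simonovits) argument that \cite{24HMST} records is not a one-shot bucketing but a short recursion. Let $B$ be the vertices of degree more than $T$ times the average; then $|B|\le n/T$. If $e(G[B])\ge e(G)/2$ one recurses on $G[B]$: the vertex count shrinks by $T$ while edges only halved, so $e/m^{1+\alpha}$ strictly increases. Otherwise at least half the edges lie among vertices of degree at most $Td$, and passing to a subgraph of minimum degree $\Omega(d)$ there yields an $O(T)$-almost-regular induced subgraph with the required density. Choosing $T=2^{\Theta(1/\alpha)}$ makes the constants close, which is where the exponent $3/\alpha+4$ in $K$ comes from. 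Your windowing captures the ``bounded max-degree'' branch of this dichotomy but has no mechanism for the ``steep-edge'' branch, which is precisely what the recursion handles.
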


To construct an induced copy of $H$, finding independent sets is important for preserving non-adjacencies during the embedding. The following lemma from \cite{fox2024largest} shows that the $(c,t)$-sparseness is useful in constructing independent sets.

\begin{lemma}[\cite{fox2024largest}]\label{independent}
    Let $\Gamma$ be a $(c,t)$-sparse graph with $0<c\leq 1/2$. Let $k\geq 2$ and $V_1,\dots,V_k$ be vertex subsets of $\Gamma$, not necessarily disjoint, each of size at least $m\geq k^2c^{-k^2}t$. A uniformly random $k$-tuple in $V_1\times \dots\times V_k$ forms an independent set of $k$ distinct elements with probability at least $c^{k^2}$.
\end{lemma}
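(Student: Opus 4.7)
The plan is to induct on $k$, at each step peeling off one coordinate while controlling the shrinkage of the remaining sets via sparseness. For the base case $k=2$, the hypothesis directly gives $e(V_1,V_2)\le (1-c)|V_1||V_2|$ since $|V_1|,|V_2|\ge t$. Subtracting the at most $|V_1\cap V_2|\le |V_1|$ coincident pairs, the probability that a uniform random $(v_1,v_2)\in V_1\times V_2$ is independent and distinct is at least $c-1/m$, which exceeds $c^4$ since $m\ge 4c^{-4}t$ and $c\le 1/2$.

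For the inductive step, call a vertex $v\in V_k$ \emph{typical} if $|N_{V_j}(v)|\le (1-c+1/m)|V_j|$ for every $j<k$. If, for some fixed $j$, the set $U_j$ of vertices violating this condition had size at least $t$, then summing degrees would give $e(U_j,V_j)>(1-c+1/m)|U_j||V_j|>(1-c)|U_j||V_j|$, contradicting $(c,t)$-sparseness; hence the typical set $V_k^\ast$ satisfies $|V_k^\ast|\ge |V_k|-(k-1)t$. For each typical $v$, set $V_j^v:=V_j\setminus (N(v)\cup\{v\})$ for $j<k$; typicality guarantees $|V_j^v|\ge (c-2/m)|V_j|$, and the calibration $m\ge k^2c^{-k^2}t$ is precisely what is needed to ensure $|V_j^v|\ge (k-1)^2c^{-(k-1)^2}t$ so that the inductive hypothesis applies. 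This yields at least $c^{(k-1)^2}\prod_{j<k}|V_j^v|$ independent distinct $(k-1)$-tuples in $V_1^v\times\cdots\times V_{k-1}^v$, and each such tuple concatenated with $v$ is an independent distinct $k$-tuple in $V_1\times\cdots\times V_k$. Summing over $v\in V_k^\ast$ and dividing by $\prod_j|V_j|$ gives
\[
\pr[\text{indep, distinct}]\ge \frac{|V_k^\ast|}{|V_k|}\cdot c^{(k-1)^2}(c-2/m)^{k-1}\ge c^{k^2-k}(1-o(1)).
\]

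The main obstacle is bookkeeping: one must verify that the parameter choice $m\ge k^2c^{-k^2}t$ is generous enough both to keep the shrinking sets above the sparseness threshold $t$ across all $k$ rounds, and to absorb the accumulated $O(k/m)$ multiplicative losses coming from the $\pm 1/m$ slack in the definition of typical and from deleting $v$ itself. The slack between the achieved exponent $k^2-k$ and the target $k^2$ is a factor of $c^{-k}\ge 2^k$ (using $c\le 1/2$), which easily dominates the $(1-o(1))$ error, so the conclusion follows once the size bounds are carefully tracked through the recursion.
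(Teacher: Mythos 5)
Your argument is correct, and the numerical calibrations do check out: for a typical $v$ one gets $|V_j^v|\ge (c-2/m)|V_j|\ge \tfrac{k^2}{2}c^{1-k^2}t\ge (k-1)^2c^{-(k-1)^2}t$ because $c^{2-2k}\ge 2^{2k-2}$, and the losses $|V_k^\ast|/|V_k|\ge 1-c^{k^2}$ and $(1-2/(cm))^{k-1}$ are indeed absorbed by the spare factor $c^{-k}\ge 2^k$ separating $c^{k^2-k}$ from the target $c^{k^2}$. Note that the paper does not prove this lemma at all --- it is imported from Fox--Nenadov--Pham \cite{fox2024largest} --- so there is no internal proof to compare against; your induction via peeling off one coordinate and discarding the fewer than $(k-1)t$ atypical vertices is the standard argument for statements of this type and is essentially the one in that reference.
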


\section{Proof of Theorem~\ref{main}}
\subsection{Main lemmas}
The first lemma we need states that the local sparseness of a graph can be bootstrapped if it is induced $H$-free for \emph{any} bipartite $H$.

\begin{lemma}\label{coro2}
    Let $H$ be a bipartite graph. For any $c,\varepsilon >0$, there exists $\beta_0 =\beta(c,H,\varepsilon)$ such that for any $\beta \ge \beta_0$ and every $(c,t)$-sparse graph $G$, if $G$ does not contain an induced copy of $H$, then it is also $(1-\varepsilon,\beta t)$-sparse.
\end{lemma}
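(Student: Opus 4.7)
The plan is to argue by contradiction using \Cref{thm-original}. Assume that $G$ is $(c,t)$-sparse and induced $H$-free, yet fails to be $(1-\varepsilon,\beta t)$-sparse, so there exist $A,B\subseteq V(G)$ with $|A|,|B|\ge \beta t$ and $e(A,B)>\varepsilon |A||B|$. The goal is to exhibit an induced copy of $H$ in $G$, contradicting the hypothesis. Fix $d$ to be the maximum degree of $H$ (so one side of the bipartition of $H$ has maximum degree at most $d$), and let $C_H=C(c,H)$ denote the constant supplied by \Cref{thm-original}.

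The key preparatory step is a rebalancing of $A$ and $B$. A naive application of \Cref{thm-original} to $G[A\cup B]$ breaks down when $A$ and $B$ are wildly imbalanced, because the threshold $C_H t^{1/d} n^{2-1/d}$ scales with $n=|A\cup B|$, whereas $e(A,B)$ scales only with $|A||B|$; for $|A|\gg|B|$ the former dominates. I would therefore assume WLOG that $|A|\ge |B|$ and define $A'\subseteq A$ to be the set of $|B|$ vertices in $A$ with largest degree into $B$. Since $\sum_{u\in A}d_B(u)=e(A,B)\ge \varepsilon|A||B|$, summing over the top $|B|$ vertices yields
\[
e(A',B)\;\ge\; |B|\cdot\frac{e(A,B)}{|A|}\;\ge\;\varepsilon|B|^2\;=\;\varepsilon|A'||B|,
\]
and now $|A'|=|B|\ge \beta t$ are of comparable size.

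Next, set $W:=A'\cup B$, so $|W|\le 2|B|$, and note that $G[W]$ inherits $(c,t)$-sparseness from $G$. Since each edge of $G[W]$ is counted at most twice in the ordered sum defining $e(A',B)$, we have $e(G[W])\ge e(A',B)/2\ge (\varepsilon/2)|B|^2$. Applying \Cref{thm-original} with $\Gamma=G[W]$ and subgraph $G[W]$ itself, the required bound $e(G[W])\ge C_H t^{1/d}|W|^{2-1/d}$ is, using $|W|\le 2|B|$, implied by
\[
(\varepsilon/2)\,|B|^{1/d}\;\ge\;C_H\cdot 2^{2-1/d}\,t^{1/d},
\]
i.e.\ $|B|\ge (2^{3-1/d}C_H/\varepsilon)^d\, t$. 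Taking $\beta_0:=(2^{3-1/d}C_H/\varepsilon)^d$, this holds for every $\beta\ge\beta_0$, and \Cref{thm-original} then produces a copy of $H$ which is induced in $G[W]$, hence induced in $G$, the desired contradiction.

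The principal obstacle, and the only point where a clever move is required, is precisely this size imbalance: the polynomial term $n^{2-1/d}$ of \Cref{thm-original} can overwhelm $\varepsilon|A||B|$ whenever $|A|/|B|$ is superconstant in $\beta$. The high-degree-truncation trick resolves this by equalising the two sides while preserving the density $\ge \varepsilon$, after which everything else is routine; minor losses (the factor $2$ from possibly overlapping $A'$ and $B$, the $2^{2-1/d}$ coming from $|W|\le 2|B|$) are absorbed painlessly into $\beta_0$.
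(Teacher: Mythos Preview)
Your proof is correct and follows essentially the same approach as the paper: both balance the two sets to equal size and then apply \Cref{thm-original} to $G[A'\cup B]$. The only cosmetic differences are that the paper averages over the larger set (picking $B'\subseteq B$ of size $|A|$) while you take the top-degree vertices of the larger set, and the paper uses $d=v(H)$ whereas you use $d=\Delta(H)$, yielding a slightly smaller $\beta_0$; neither affects the argument.
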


\begin{proof}
    Clearly, $H$ is a bipartite graph with all vertices in one part having degree at most $v(H)$.
    Let $C=C(c,H)$ be the constant such that the conclusion of \Cref{thm-original} holds and set $\beta_0 =(8C/\varepsilon)^{v(H)}$. Suppose to the contrary that there exists a $(c,t)$-sparse graph $G$ which is induced $H$-free   but not $(1-\varepsilon,\beta t)$-sparse for some $\beta \geq \beta_0$. Then there exist vertex subsets $A,B\subseteq V(G)$ such that $|B|\geq |A|\geq\beta t$ and $e(A,B)\geq \varepsilon |A||B|$. By averaging, there is a subset $B'\subseteq B$ such that $|B'|=|A|$ and $e(A,B')\geq\varepsilon |A|^2$. Then, the induced subgraph $G'=G[A\cup B']$ satisfies that $|G'|\leq 2|A|$ and 
    \begin{align*}
        e(G')&\geq \frac{e(A,B')}{2}\ge \frac{\varepsilon |A|^2}{2}\geq\frac{\varepsilon}{2}(\beta t)^{1/v(H)}|A|^{2-1/v(H)}\\
        &\geq \frac{\varepsilon \beta ^{1/v(H)}}{8} t^{1/v(H)}|G'|^{2-1/v(H)}\ge Ct^{1/v(H)}|G'|^{2-1/v(H)}.
    \end{align*}
    Applying Corollary \ref{thm-original} with $(c,t,d,H,\Gamma)_{\ref{thm-original}}=(c,t,v(H),H,G')$, we can find an induced copy of $H$ in $G'$, which is also an induced copy of $H$ in $G$, and thus we get a contradiction. 
\end{proof}

The second lemma harnesses this strengthened local sparseness to find many induced paths in almost regular graphs.

\begin{lemma}\label{pathnumber}
Let $C,\beta ,\varepsilon, K, \ell$ be positive reals such that  $1/C\ll 1/\beta\ll \varepsilon \ll 1/K, 1/\ell.$ Let $G$ be a $K$-almost regular and $(1-\varepsilon, \beta t)$-sparse graph with average degree $d \ge 2Ct$. Then the number of induced $\ell$-paths in $G$ is at least $\frac{n d^{\ell}}{2^{\ell+1}K^{\ell}}$.
\end{lemma}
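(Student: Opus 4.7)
The plan is to count ordered induced $\ell$-paths greedily: I build $(v_0, v_1, \ldots, v_\ell)$ one vertex at a time and show that at each step at least $d/(2K)$ valid extensions are available. This yields $\ge n(d/(2K))^\ell$ ordered induced $\ell$-paths, and dividing by $2$ for the two orderings of each unordered induced path gives the target $nd^\ell/(2^{\ell+1}K^\ell)$.

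The key sparseness-based estimate is that for any $u, w \in V(G)$, since $|N(u)|, |N(w)| \ge d/K \ge \beta t$ by the hierarchy (using $d \ge 2Ct$ and $C \gg \beta K$),
\[
\sum_{v \in N(u)} |N(v) \cap N(w)| \;=\; e(N(u), N(w)) \;\le\; \varepsilon |N(u)||N(w)| \;\le\; \varepsilon K^2 d^2.
\]
Setting $M := 4\varepsilon K^3 d \ell$, Markov's inequality gives that at most $d/(4K\ell)$ vertices $v \in N(u)$ satisfy $|N(v) \cap N(w)| > M$. I will maintain the invariant that each chosen $v_i$ is \emph{good}, meaning $|N(v_i) \cap N(v_k)| \le M$ for every $k < i$; at each step I pick $v_i \in N(v_{i-1})$ satisfying both (i) validity (the extension keeps the path induced) and (ii) goodness.

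To bound failures at step $i$: goodness failures for a single $k$ are at most $d/(4K\ell)$, so union-bounding over the $<\ell$ values of $k$ loses at most $d/(4K)$. Validity failures consist of repeats (at most $\ell$) together with chord violations $v_i \in N(v_j) \cap N(v_{i-1})$ for some $j \le i-2$; crucially, since $v_{i-1}$ was chosen good at the previous step, each of these intersections has size $\le M$, so the total invalidity contribution is at most $\ell + \ell M \le d/(4K)$ under the hierarchy $\varepsilon \ll 1/(K^4\ell^2)$. Subtracting, at least $|N(v_{i-1})| - d/(2K) \ge d/(2K)$ valid good extensions remain.

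The main delicacy I anticipate is correctly coupling the goodness invariant across consecutive steps: goodness of $v_{i-1}$ (secured at step $i-1$) is precisely what controls the chord-violation term at step $i$, while enforcing goodness of the new $v_i$ propagates the invariant forward. Once this two-layered induction is set up, the step-by-step multiplication $n \cdot (d/(2K))^\ell$ is immediate, and the final factor of $2$ accounts for each unordered induced path being traversed in two directions.
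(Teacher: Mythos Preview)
Your proposal is correct and follows essentially the same approach as the paper: both greedily build induced paths while maintaining the invariant that the last chosen vertex has small common neighbourhood with every earlier vertex, so that chord violations at the next step are controlled. The only cosmetic difference is that the paper packages the exceptional vertices into a global set $X_u=\{v:|N(u,v)|>\varepsilon Kd\}$ and bounds $|X_u|<\beta t$ directly from sparseness, whereas you obtain the analogous bound via Markov's inequality on $e(N(v_{i-1}),N(v_k))$ at each step; the resulting inductive count is identical.
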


\begin{proof}
    For every $u\in V(G)$, set 
    $$X_u:=\{v~\big | ~ v\ne u, |N_G(u,v)|>\varepsilon Kd\}.$$ 
    We first show that $|X_u|<\beta t$ for every $u$. Otherwise, let $u$ be any vertex with $|X_u|\geq \beta t$. Since $G$ is $K$-almost-regular, we obtain that $d/K\leq |N_G(u)|\leq Kd$.
    Since $C\gg \beta \gg \ell, K$ and $d\ge 2Ct$, we get that $|N_G(u)| \ge \beta t$. 
    Therefore,
    $e(N_G(u),X_u)\geq \varepsilon Kd\cdot |X_u|\geq \varepsilon |N_G(u)||X_u|$, contradicting that $G$ is $(1-\varepsilon,\beta t)$-sparse. 
    
    To prove the claim, it suffices to show that for every  $u\in V(G)$ and $1\le s \le \ell$, there are at least $\frac{d^{s}}{2^{s}K^{s}}$ induced $s$-paths  $P=v_0v_1v_2\ldots v_{s}$ which start from $u = v_0$ satisfying that $v_{s} \notin X_{v_{i}}$ for $0\leq i \leq s-1$. If so, then the number of induced $\ell$-paths in $G$ is at least $\frac{1}{2}\cdot n\cdot \frac{d^{\ell}}{2^{\ell}K^{\ell}}=\frac{n d^{\ell}}{2^{\ell+1}K^{\ell}}$ as desired.

    We prove it by induction on $s$.
    When $s=1$, this statement clearly holds by considering all the edges $uv_1$ with $v_1\in N_G(u)\setminus X_u$ as $|N_G(u)\setminus X_u| \ge d/K-\beta t \ge d/2K$.
    Now suppose that $s \geq 2$ and this statement holds for $s-1$. Let $Q=v_0v_1v_2\ldots v_{s-1}$ be any induced $(s-1)$-path satisfying that $v_{s-1} \notin  X_{v_{i}}$ for $0\leq i \leq s-2$. Let $A_{v_{s}}=N_G(v_{s-1})\setminus\left(\bigcup_{i=0}^{s-2}N_G(v_i)\cup V(Q)\right)$. 
    For $0\leq i \leq s-2$, since $v_{s-1} \notin X_{v_{i}}$, we have $|N_G(v_i)\cap N_G(v_{s-1})| \le \varepsilon Kd$. Then $|A_{v_{s}}|\geq d/K-(s-1) \varepsilon Kd -s\geq 3d/4K$ and for each $v\in A_{v_{s}}$, the $s$-path $v_0v_1v_2\ldots v_{s-1}v$ is induced. Now let $Y_{v_{s}}=A_{v_s}\setminus\left(\bigcup_{i=0}^{s-1}X_{v_i}\right)$. Then we have $|Y_{v_{s}}| \ge  |A_{v_{s}}| - s\beta t \ge 3d/4K - \ell \beta t \ge d/2K$.
    And for any $v\in Y_{v_{s}}$, $v_0v_1v_2\ldots v_{s-1}v$ is an induced path with $v\notin X_{v_i}$ for $0\leq i \leq s-1$.
    So there are at least $\frac{d^{s-1}}{2^{s-1}K^{s-1}}\cdot \frac{d}{2K}=\frac{d^{s}}{2^{s}K^{s}}$ induced $s$-paths  $P=v_0v_1v_2\ldots v_{s}$ which start from $u = v_0$ satisfying that $v_{s} \notin X_{v_{i}}$ for $0\leq i \leq s-1$. 
\end{proof}

We next introduce the key lemma of this paper, and the proof of it will be divided into two cases and presented in the subsequent two sections. Before the formal statement, we need a notion of admissible and good paths, which was first introduced in \cite{21CJL} and is recursively defined as follows.

\begin{defn}\label{good}
    Any path of length one is both admissible and good. For $s\geq 2$, an induced $s$-path is \textit{admissible} if any $(s-1)$-subpath of it is good, and it is \textit{good} if it is admissible and shares the same endpoints with at most $(Lt)^{s-1}$ admissible $s$-paths\footnote{To be precise, admissible/good paths are defined with respect to parameters $L$ and $t$. As we will use the same $L$ and $t$ throughout the proof, we omit mentioning them.}. Finally, an induced path is \textit{bad} if it is not good.
\end{defn}

Note that by the definition, all the induced 2-paths are admissible, and for every $i\in [s-1]$, any $i$-subpath of an admissible $s$-path is good. Our key lemma is as follows.

\begin{lemma}\label{lem: main}
    Let $C,L,\beta ,\varepsilon, K, \ell$ be positive reals such that  $1/C\ll 1/L\ll 1/\beta\ll \varepsilon \ll 1/K, 1/\ell.$ Let $G$ be a $K$-almost regular and $(1-\varepsilon, \beta t)$-sparse graph with average degree $d \ge 2Ct$. If for some $s$ with $2\le s\le \ell$, the number of bad admissible $s$-paths in $G$ is at least $$\frac{1}{2}(1/4K)^{2\ell-s}nd^s,$$
then $G$ contains an induced copy of $C_{2\ell}$.
\end{lemma}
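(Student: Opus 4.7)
The plan is to split into two cases depending on whether $s=2$ or $s\geq 3$, as the endgame differs substantially between them. Before the split, several reductions are common. By averaging over the $\tfrac{1}{2}(1/4K)^{2\ell-s}nd^s$ bad admissible $s$-paths, first I would pigeonhole to find a vertex $x \in V(G)$ that is an endpoint of $\Omega((1/4K)^{2\ell-s}d^s)$ of them; call this collection $\mathcal{F}'$. By the definition of \emph{bad}, each $P \in \mathcal{F}'$ shares its endpoints with strictly more than $(Lt)^{s-1}$ admissible $s$-paths. A second averaging step on the other endpoint produces a set $Y \subseteq V(G)$ such that for each $y \in Y$ there exist $\gg (Lt)^{s-1}$ admissible $(x,y)$-paths. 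Since the length-$(s-1)$ subpaths of admissible paths are automatically good, the congestion at every intermediate vertex is bounded, and a greedy deletion argument extracts a family $\mathcal{P}_y$ of many internally vertex-disjoint admissible $(x,y)$-paths for every $y \in Y$. The last edges of the paths in $\bigcup_y \mathcal{P}_y$ then span a graph $H$ which, by construction, is dense on a relatively small vertex set.

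For the case $s \geq 3$, the idea is to perform the embedding in two stages. First, using Lemma \ref{coro2} to upgrade the sparseness of $H$ (inherited from that of $G$) together with a Bondy--Simonovits type counting, I would embed inside $H$ an induced $(2\ell-2s)$-path $P = z_1 z_2 \cdots z_{2\ell-2s+1}$ whose endpoints $z_1$ and $z_{2\ell-2s+1}$ lie in $Y$; since $|N_G(x)| \leq Kd$ is small compared with $|V(H)|$, I may additionally require that the internal vertices of $P$ are non-neighbours of $x$. Second, I would apply Lemma \ref{independent} to the internal vertex sets of the paths in $\mathcal{P}_{z_1}$ and $\mathcal{P}_{z_{2\ell-2s+1}}$ to select one admissible $(x,z_1)$-path from $\mathcal{P}_{z_1}$ and one admissible $(x,z_{2\ell-2s+1})$-path from $\mathcal{P}_{z_{2\ell-2s+1}}$ whose internal vertex sets together are independent in $G$ and disjoint from $V(P)\setminus\{z_1,z_{2\ell-2s+1}\}$. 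The three pieces then form an induced $C_{2\ell}$.

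For the case $s=2$, a bad admissible $2$-path is just $xv_1y$, so $x$ is adjacent to every candidate for an internal vertex of a middle path inside $H$, and the above strategy inevitably produces chords through $x$. Instead I would apply dependent random choice directly to $H$: sample a small random subset $T \subseteq N_G(x)$ and retain those $y \in V(H)$ which have many neighbours in $T$. The resulting set has large pairwise codegree, and combining this with the bootstrapped sparseness from Lemma \ref{coro2} and the bounded-degree induced embedding result Corollary \ref{thm-original} lets me exhibit an induced $C_{2\ell}$ wholly inside $H$, so that no vertex of the cycle is required to be adjacent to $x$.

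The hardest aspect, shared by both cases, is the \emph{inducedness} requirement, i.e., avoiding all chords during the gluing step. This forces us to invoke the bootstrapped sparseness of Lemma \ref{coro2} and the independent-set guarantee of Lemma \ref{independent} at essentially every stage, and the constant $(Lt)^{s-1}$ in the definition of \emph{good} is calibrated precisely to absorb the multiplicative losses incurred along the way. In the $s \geq 3$ case the critical subtlety is simultaneously avoiding chords between the two $s$-paths from $x$, the middle path $P$, and back to $x$; in the $s=2$ case the obstacle is that $x$ interacts strongly with $H$, so the dependent random choice must be engineered carefully to ensure that the induced $C_{2\ell}$ located inside $H$ does not collapse because of a forced chord to~$x$.
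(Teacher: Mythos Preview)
Your high-level architecture matches the paper's: split on $s=2$ versus $s\ge 3$, pigeonhole to a root vertex $x$, extract for each endpoint $y$ a large family $\mathcal{P}_y$ of internally vertex-disjoint admissible $(x,y)$-paths (using that subpaths of admissible paths are good), build the auxiliary graph $H$ of last edges, and in the $s\ge 3$ case embed a middle path and glue. The $s=2$ plan via dependent random choice is also along the right lines, though the paper carries it out more concretely by fixing a single $u\in B$ with few light pairs in $N_A(u)$ and then applying Lemma~\ref{independent} twice, rather than invoking Corollary~\ref{thm-original}.

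There is, however, a genuine gap in the $s\ge 3$ case. Finding a $(2\ell-2s)$-path $P=z_1\cdots z_m$ that is merely induced in $H$ (or even in $G$) is not enough to complete the gluing. To make $P_1\cup P\cup P_2$ induced you must ensure that no internal vertex of any $z_i$ is adjacent to an internal vertex of your chosen $P_1\in\mathcal{P}_{z_1}$ or $P_2\in\mathcal{P}_{z_m}$. A single vertex $z_i$ can have up to $Kd$ neighbours in $G$, and since the paths in $\mathcal{P}_{z_1}$ are internally disjoint, each such neighbour kills at most one path---but $Kd\gg|\mathcal{P}_{z_1}|=\gamma t$, so a priori \emph{all} paths in $\mathcal{P}_{z_1}$ could be eliminated. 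Lemma~\ref{independent} does not rescue this: it produces an independent transversal of given large sets, not a pair of paths whose internal sets are mutually non-adjacent to a fixed small set $V(P)$ (and $|V(P)|$ is a constant, so the $(1-\varepsilon,\beta t)$-sparseness cannot be applied to $V(P)$ directly either).

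The paper's key additional idea is to build a second auxiliary graph $\Gamma\supseteq H$ on the same vertex set, where one also inserts an edge between $u$ and $b\in B$ whenever $\varphi(u)$ is adjacent to the internal vertices of at least $\varepsilon'\gamma t$ paths in $\mathcal{P}_{\varphi(b)}$ (the type-2 and type-3 edges), as well as edges recording $\varphi(u)=\varphi(v)$ or $\varphi(u)\varphi(v)\in E(G)$. A short double-counting using the $(1-\varepsilon,\beta t)$-sparseness of $G$ shows that $\Gamma$ is still $(1/4,16\beta t)$-sparse, so the Fox--Nenadov--Pham tree theorem yields a path $P$ that is induced \emph{in $\Gamma$}. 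Being induced in $\Gamma$ now encodes exactly the compatibility you need: each $z_i$ has at most $\varepsilon'\gamma t$ offending paths in $\mathcal{P}_{\varphi(z_1)}$ and $\mathcal{P}_{\varphi(z_m)}$, so after discarding $O(\ell\varepsilon'\gamma t)<\gamma t/2$ paths from each family the gluing goes through. Your proposal is missing this mechanism, and without it the $s\ge 3$ argument does not close. A secondary omission is the cleanup step (condition~(i) in the paper) guaranteeing that every $a\in A$ has degree at least $\gamma t$ in $H$, without which $H$ need not be dense enough to embed the middle path.
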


\subsection{Putting things together}
\begin{proof}[Proof of Theorem~\ref{main}]
Choose parameters satisfying the following hierarchy:
\[
0<\frac{1}{C}\ll \frac{1}{L} \ll \frac{1}{\beta}\ll \varepsilon \ll c,\frac{1}{\ell}.
\]
Suppose to the contrary that $G$ is an $n$-vertex $(c,t)$-sparse graph with $e(G) \ge C t^{1-1/\ell}n^{1+1/\ell}$ and no induced copy of $C_{2\ell}$.

By Lemma~\ref{almostregular}, we can assume that $G$ is $K$-almost-regular, where $K= 2^{3\ell +4}$. 
By \Cref{coro2},  we derive that $G$ is $(1-\varepsilon,\beta t)$-sparse.
Let $d =d(G)$ be the average degree of $G$. 
Since $n^2>e(G) \ge  C t^{1-1/\ell}n^{1+1/\ell}$ and $C \gg 1$, we have $n\ge t$.
Then $d\geq 2Ct^{1-1/\ell}n^{1/\ell}\geq 2Ct$. By~\Cref{lem: main}, we derive that for any $2\le s\le \ell$, the number of bad admissible $s$-paths is at most $\frac{1}{2}(1/4K)^{2\ell-s}nd^s$.

We claim that for any $1\le s \le \ell$, the number of bad induced $s$-paths is at most $(1/4K)^{2\ell-s}nd^s$.
We prove it by induction on $s$.
When $s=1$, since any path of length one is both admissible and good, we are done.
Now suppose that $s \geq 2$ and this statement holds for $s-1$.
Then by the inductive hypothesis, we know that the number of bad induced $(s-1)$-paths is at most $ (1/4K)^{2\ell-(s-1)}nd^{s-1}$.
Since any induced $s$-path which is not admissible contains a bad induced $(s-1)$-subpath and recall that $\Delta(G)\le Kd$,
we derive that the number of non-admissible induced $s$-paths is at most $2\cdot Kd \cdot \big( (1/4K)^{2\ell-(s-1)}nd^{s-1}\big) = \frac{1}{2}(1/4K)^{2\ell-s}nd^s$.
By the definition of good paths, if an induced path is bad, then either it is non-admissible or it is admissible but bad.
Together with~\Cref{lem: main}, we derive that the number of bad induced $s$-paths is at most $(1/4K)^{2\ell-s}nd^s$ as claimed.

Specially, the number of bad induced $\ell$-paths is at most $(1/4K)^{\ell}nd^\ell$.
Since the number of induced $\ell$-paths is at least $\frac{n d^{\ell}}{2^{\ell+1}K^{\ell}}$ by~\Cref{pathnumber}, we derive that the number of good induced $\ell$-paths is at least 
\begin{align*}
    \frac{n d^{\ell}}{2^{\ell+1}K^{\ell}}  -(1/4K)^{\ell}nd^\ell \ge (1/4K)^{\ell}nd^\ell \geq  (C/4K)^{\ell} t^{\ell -1} n^2 > n^2 (Lt)^{\ell -1}.
\end{align*}
On the other hand, by definition, the number of good induced  $\ell$-paths is at most $n^2 (Lt)^{\ell -1}$, a contradiction. This finishes the proof of Theorem~\ref{main}.
\end{proof}

\section{Proof of~\Cref{lem: main} for $s =2$}

Take constants $M,\varepsilon_0$ such that $$\frac{1}{L} \ll \frac{1}{M} \ll \frac{1}{\beta} \ll \varepsilon_0 \ll \varepsilon.$$
Suppose that there are at least $\frac{1}{2}(1/4K)^{2\ell-2}nd^2$ bad admissible $2$-paths. Then, by pigeonhole, there exists a vertex $x$ such that the number of bad admissible $2$-paths starting from $x$ is at least $\frac{1}{2}(1/4K)^{2\ell-2}d^2$. Let $A$ and $B$ be the vertex sets of all the second and last vertices on such paths respectively, so we have $e(A,B)\geq \frac{1}{2}(1/4K)^{2\ell-2}d^2$. 
Since each such $2$-path is induced, we derive that $A$ and $B$ are disjoint.
Since $G$ is $K$-almost regular, we have $|A| \le d_G(x) \le Kd.$
By the definition of bad paths, $d_A(u)\geq Lt$ for every vertex $u\in B$, which implies that $e(A,B) \ge Lt|B|$.

Pick one vertex $u\in B$ uniformly at random. Then $\mathbb{E}\left[d_A(u)\right]=e(A,B)/|B|$. We say that a pair $(a_1,a_2)\in N_A(u)\times N_A(u) \subseteq A\times A$ is \emph{light} if $|N_B(a_1,a_2)|<Mt$, and is \emph{heavy} otherwise. Let $Y$ be the number of all light pairs in $N_A(u)\times N_A(u)$, then we have $\mathbb{E}[Y]\leq |A|^2\cdot M t/|B|$. As $L \gg M \gg 1/\varepsilon_0 \gg \ell$, we have
\begin{align*}
    \mathbb{E}\left[\varepsilon_0d^2_A(u)-Y\right] &\ge \varepsilon_0 \left(\mathbb{E}[d_A(u)]\right)^2- \mathbb{E}[Y]\geq\frac{1}{|B|}\left(\varepsilon_0\cdot \frac{e^2(A,B)}{|B|}-|A|^2\cdot M t\right)\\
    &\ge \frac{1}{|B|}\left(\varepsilon_0\cdot Lt \cdot \frac{1}{2}(1/4K)^{2\ell-2}d^2  -K^2d^2\cdot M t\right)>0.
\end{align*}
So there exists a vertex $u\in B$ such that the number of light pairs in $N_A(u)$ is at most $\varepsilon_0d^2_A(u)$. Fix such a vertex $u$. 

Uniformly at random pick $\ell$ vertices $x_1,x_2,\dots, x_\ell$ from $N_A(u)$. Define event
\[
\mathcal{E}=\Big\{\mbox{$X=\{x_1,x_2,\dots, x_\ell\}$ is an independent set consisting of $\ell$ distinct vertices}\Big\}.
\]
Since a $(1-\varepsilon,\beta t)$-sparse graph is also $(1/2, \beta t)$-sparse and
$d_A(u) \ge Lt  \ge \ell^2 (\frac{1}{2})^{-\ell^2} \beta t$, Lemma~\ref{independent} implies that  $\mathbb{P}[\mathcal{E}] \ge (1/2)^{\ell^2}$.

For any $1\le i<j \le \ell$, define events
\[
\mathcal{E}_{i,j}=\Big\{\mbox{$(x_i,x_j)$ is a heavy pair, i.e., $|N_B(x_i,x_j)| \ge Mt$}\Big\},
\]
and
\[
\mathcal{E}'_{i,j}=\Big\{\mbox{$\left|N_{i,j}\right|\geq Mt/\ell$, where $N_{i,j}=N_B(x_i,x_j)\setminus\left(\bigcup_{k\in [\ell]\setminus\{i,j\}}N_B(x_k)\right)$}\Big\}.
\]
We next bound the probability $\mathbb{P}[\mathcal{E}'_{i,j} \cap \mathcal{E}_{i,j}]$.
Fix $1\le i<j \le \ell$, $k\in[\ell]\setminus\{ i,j\}$, and suppose that $(x_i,x_j)$ is a heavy pair. Let 
\[Z:=\big\{v\in N_A(u)\backslash\{x_i,x_j\}~\big | ~ |N_B(v)\cap N_B(x_i,x_j)|\geq |N_B(x_i,x_j)|/\ell\big\}.\] 
Note that $|N_B(x_i,x_j)| \ge Mt \ge \beta t$. If $|Z|>\beta t$, then $e(Z,N_B(x_i,x_j)) \ge |Z|\cdot|N_B(x_i,x_j)|/\ell > \varepsilon |N_B(x_i,x_j)|\cdot|Z|$, a contradiction to that $G$ is $(1-\varepsilon, \beta t)$-sparse. So  $|Z|\le \beta t$.
If $x_k \notin Z$ for any $k\in[\ell]\setminus \{i,j\}$, then 
$$\left|N_{i,j}\right|\geq\left(1-\frac{\ell-2}{\ell}\right)|N_B(x_i,x_j)| > Mt/\ell .$$
Therefore, 
\begin{align*}
    \mathbb{P}[\mathcal{E}'_{i,j} ~\big |~ \mathcal{E}_{i,j}] 
    &\ge \mathbb{P}[x_k \notin Z \text{ for any } k\in[\ell]\setminus\{i,j\} ~\big |~ \mathcal{E}_{i,j}]\\ 
    &\ge  1-  ( \ell-2)\frac{ \beta t}{d_A(u)} \ge  1-  \frac{ \ell \beta}{L},
\end{align*}
where the last inequality follows from that $d_A(u)\geq Lt$.
Since the number of light pairs in $N_A(u)$ is at most $\varepsilon_0 d^2_A(u)$, we obtain that $\mathbb{P}[\mathcal{E}_{i,j}] \ge 1-\varepsilon_0$. Therefore, we derive that
\begin{align*}
    \mathbb{P}[\mathcal{E}'_{i,j}\cap \mathcal{E}_{i,j}] = \mathbb{P}[\mathcal{E}_{i,j}]\cdot \mathbb{P}[\mathcal{E}'_{i,j}~\big |~ \mathcal{E}_{i,j}]  \ge 1-\varepsilon_0- \frac{ \ell \beta}{L}.
\end{align*}

Let $\mathcal{D}$ be the event that $\mathcal{E}$, $\mathcal{E}_{i,j}$ and $\mathcal{E}'_{i,j}$ occur for all $1\leq i<j\leq \ell$.
By the union bound, we obtain that 
\begin{align*}
    \mathbb{P}[\mathcal{D}] \ge 1- (1- (1/2)^{\ell^2}) - \binom{\ell}{2} \left(\varepsilon_0+ \frac{ \ell \beta}{L}\right) >0.
\end{align*}
Thus, we can fix a choice of $X$ such that $\mathcal{D}$ occurs. Then $N_{i,j} \ge Mt/\ell > \ell^2 (\frac{1}{2})^{\ell^2}\beta t$ for any $1\le i<j\le \ell$.
By Lemma~\ref{independent}, with positive probability, there is an $\ell$-tuple $X'\in N_{1,2}\times N_{2,3}\times \dots \times N_{\ell-1,\ell}\times N_{\ell,1}$ which forms an independent set of $\ell$ distinct elements. Note that $G[X\cup X']$ is an induced copy of $C_{2\ell}$, a contradiction. This completes the proof of~\Cref{lem: main} for $s=2$.

\section{Proof of Lemma \ref{lem: main} for $s\geq3$}
We take parameter $\gamma$ such that
\[
1/C\ll 1/L\ll 1/\gamma\ll 1/\beta\ll \varepsilon \ll 1/K, 1/s, 1/\ell.
\]
Fix $s$ with $3\leq s\leq \ell$, and set $g:=\frac{1}{2}(1/4K)^{2\ell-s}$. Suppose that the number of bad admissible $s$-paths is at least $gnd^s$. Then, by pigeonhole principle, there exists a vertex $x$ such that there are at least $gd^s$ bad admissible $s$-paths starting at $x$. Denote by $\C F$ the family of these paths.

Now we greedily remove some paths from $\C F$ in the following process. Suppose $\mathcal{F}^*\subseteq\mathcal{F}$ is the current family after removing some paths from $\mathcal{F}$. If there exists an induced $s$-path $P=xx_1\cdots x_{s-1}x_s$ in $\C F^*$ satisfying that 
\begin{enumerate}[label=(\roman*)]
    \item the number of induced $s$-paths $P'\in \C F^*$ with $P'=xx_1\cdots x_{s-1}x'_s$ (namely $P'$ shares the first $s$ vertices with $P$) is at most $\frac{gd}{4K^{s-1}}$, or

    \item the number of induced $s$-paths $P'\in \C F^*$ with $P'=xx'_1\cdots x'_{s-1}x_s$ (namely $P'$ shares the same endpoints with $P$) is at most $\frac{(Lt)^{s-1}}{2}$,
\end{enumerate}
then we remove $P$ along with all associated $P'$ satisfying (i) or (ii). The procedure terminates until no such $P$ exists, and we denote by $\C F'\subseteq \C F$ the final resulting family of induced $s$-paths. 
\begin{claim}\label{cl:paths}
 $\C F'\neq \varnothing.$
\end{claim}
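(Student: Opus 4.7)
The plan is to show that over the course of the pruning process at most $3|\C F|/4$ paths are ever deleted from $\C F$, so that at least $|\C F|/4\geq gd^s/4>0$ remain in $\C F'$; here we use the fact that $|\C F|\geq gd^s$ by pigeonhole. Paths get deleted for two reasons, according to whether trigger~(i) or trigger~(ii) fires, so I will estimate the two contributions separately.

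For trigger~(i): once~(i) fires on some $P=xx_1\cdots x_{s-1}x_s$, every path of $\C F^{*}$ sharing the $(s-1)$-prefix $xx_1\cdots x_{s-1}$ is removed, so~(i) can fire at most once per such prefix. Since $G$ is $K$-almost regular, the number of length-$(s-1)$ paths starting at $x$ is at most $(Kd)^{s-1}$, and each firing removes at most $gd/(4K^{s-1})$ paths. Hence the total number of paths removed by~(i) is at most $(Kd)^{s-1}\cdot\frac{gd}{4K^{s-1}}=\frac{gd^s}{4}\leq\frac{|\C F|}{4}$.

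For trigger~(ii): let $Y$ be the set of endpoints (other than $x$) of paths in $\C F$ and let $N_y$ denote the number of paths in $\C F$ from $x$ to $y$. Trigger~(ii) fires at most once per $y\in Y$, each firing removing at most $(Lt)^{s-1}/2$ paths. The key observation is that for every $y\in Y$, the bad admissible $(x,y)$-path lying in $\C F$ witnesses that the number of admissible $(x,y)$-paths of length $s$ in $G$ exceeds $(Lt)^{s-1}$; by \Cref{good}, \emph{every} admissible $(x,y)$-path of length $s$ is then bad and therefore lies in $\C F$. Consequently $N_y>(Lt)^{s-1}$, and summing over $Y$ yields $|\C F|=\sum_{y\in Y}N_y>|Y|(Lt)^{s-1}$, i.e.\ $|Y|<|\C F|/(Lt)^{s-1}$, so the total number of paths removed by~(ii) is strictly less than $|\C F|/2$.

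Combining both bounds gives the desired $3|\C F|/4$ estimate and thus $\C F'\neq\varnothing$. The only delicate step is the inequality $|Y|<|\C F|/(Lt)^{s-1}$; it is not really an obstacle so much as the conceptual heart of the argument, exploiting that goodness is an endpoint-invariant property among admissible $s$-paths, so a single bad admissible $(x,y)$-path in $\C F$ sweeps every admissible $(x,y)$-path of length $s$ into $\C F$.
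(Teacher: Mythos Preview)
Your argument is correct and matches the paper's proof essentially line for line: the same $(Kd)^{s-1}\cdot\frac{gd}{4K^{s-1}}$ bound for deletions via~(i), and the same use of the fact that every endpoint $y$ in $\C F$ has at least $(Lt)^{s-1}$ admissible (hence bad) $s$-paths from $x$ to $y$ lying in $\C F$, giving the $|\C F|/2$ bound for deletions via~(ii). The paper writes the (ii) bound as $\sum_y (Lt)^{s-1}/2 \le \sum_y c_{\C F}(x,y)/2 = |\C F|/2$ directly, whereas you first extract $|Y| < |\C F|/(Lt)^{s-1}$ and then multiply, but this is only a cosmetic rearrangement.
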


\begin{poc}
Let us calculate the number of paths removed because of (i) and (ii) respectively. We claim that the number of removed paths via (i) is at most
    \begin{equation}\label{eq:i}
    (Kd)^{s-1}\cdot \frac{gd}{4K^{s-1}}= \frac{gd^s}{4}\le \frac{|\C F|}{4}.
    \end{equation}
Indeed, during each deletion via (i), we actually remove a bunch of paths sharing the same subpath of length $s-1$ starting from $x$. Since $G$ is almost $K$-regular, $(Kd)^{s-1}$ is a trivial upper bound for the number of $(s-1)$-paths starting from $x$, which in turn gives an upper bound for the number of subpath options $xx_1\cdots x_{s-1}$. For every subpath $xx_1\cdots x_{s-1}$, we remove at most $\frac{gd}{4K^{s-1}}$ $s$-paths $xx_1\cdots x_{s-1}x'_s$ from $\C F$ according to (i). Hence, (\ref{eq:i}) follows.

We next prove that the number of removed paths via (ii) is at most $|\C F|/2$. Let $W$ be the union of endpoints of all paths in $\C F$ with $x$ excluded. For any vertex $y\in W$, denote by $c_{\C F}(x,y)$ the number of $(x,y)$-paths in $\C F$. Since every $P\in \C F$ is admissible and bad, there are at least $(Lt)^{s-1}$ paths $P'$ in $\C F$ (which are all admissible and bad) sharing the same endpoints with $P$. Thus, $c_{\C F}(x,y)\ge (Lt)^{s-1}$ for every $y\in W$. 
Since we remove at most $(Lt)^{s-1}/2$ paths sharing the same endpoints from $\C F$ in each deletion via (ii), the number of removed paths via (ii) is at most
    $$\sum_{y\in W}\frac{(Lt)^{s-1}}{2}\le \sum_{y\in W}\frac{c_{\C F}(x,y)}{2}=\frac{|\C F|}{2},$$
where the last equality follows from that $|\C F|=\sum_{y\in W}c_{\C F}(x,y)$.

Thus, we remove in total at most $3|\C F|/4$ paths from $\C F$ and the claim holds.
\end{poc}

Let $Y$ be the set of all endpoints of paths in $\C F'$ excluding $x$. For any vertex $y\in Y$, let $\C P_y\subseteq \C F'$ be a maximal set of internally vertex-disjoint induced $(x,y)$-paths. Then we have the following claim.

\begin{claim}\label{cl:max-disj-path}
 For each $y\in Y$, $|\C P_y|\ge \gamma t.$
\end{claim}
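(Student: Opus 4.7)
The plan is to argue by contradiction. Suppose $|\mathcal{P}_y| < \gamma t$ for some $y \in Y$, and let $U = \bigcup_{P \in \mathcal{P}_y}(V(P)\setminus\{x,y\})$ be the union of internal vertices of the paths in $\mathcal{P}_y$. Since the paths in $\mathcal{P}_y$ are pairwise internally disjoint and each has $s-1$ internal vertices, $|U| = (s-1)|\mathcal{P}_y| < (s-1)\gamma t$. By the maximality of $\mathcal{P}_y$, every induced $(x,y)$-path in $\mathcal{F}'$ must share an internal vertex with some path in $\mathcal{P}_y$, hence it passes through some vertex of $U$.

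Because the pruning procedure removed every path in $\mathcal{F}'$ satisfying condition (ii), for each $P \in \mathcal{F}'$ with endpoints $x$ and $y$ there are more than $(Lt)^{s-1}/2$ paths in $\mathcal{F}'$ sharing the same endpoints; in particular, the total number of induced $(x,y)$-paths in $\mathcal{F}'$ exceeds $(Lt)^{s-1}/2$. The strategy is then to upper bound the number of admissible $(x,y)$-paths of length $s$ passing through $U$ and derive a contradiction with this lower bound.

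The crucial ingredient is a short induction on the definition showing that every $i$-subpath of an admissible $s$-path is good, for each $1 \le i \le s-1$. Fix $u \in U$ and a position $i \in [1,s-1]$, and consider the admissible $(x,y)$-paths of length $s$ in which $u$ occupies the $i$-th position. If any such path exists, then its $(x,u)$-prefix of length $i$ is good, so by the definition of good there are at most $(Lt)^{i-1}$ admissible $(x,u)$-paths of length $i$; likewise, its $(u,y)$-suffix of length $s-i$ is good, so at most $(Lt)^{s-i-1}$ admissible $(u,y)$-paths of length $s-i$. By concatenation, the number of admissible $(x,y)$-paths of length $s$ with $u$ at position $i$ is at most $(Lt)^{i-1}(Lt)^{s-i-1} = (Lt)^{s-2}$.

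Summing over all $u \in U$ and all positions $i \in [1,s-1]$ gives an upper bound of $|U|(s-1)(Lt)^{s-2} < (s-1)^2 \gamma L^{s-2} t^{s-1}$ on the number of admissible $(x,y)$-paths of length $s$ through $U$. Since $\gamma \ll L$ and $s \le \ell$ is fixed, this is strictly smaller than $(Lt)^{s-1}/2$, contradicting the lower bound above. The main subtlety is the recursive propagation of the good property from $(s-1)$-subpaths down to every shorter subpath, which turns a hypothesis stated only at the $(s-1)$-level into usable bounds on $i$-paths for all $1\le i\le s-1$; the rest of the argument is a standard maximality/pigeonhole calculation powered by condition (ii) of the pruning step.
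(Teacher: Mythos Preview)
Your proposal is correct and follows essentially the same approach as the paper's proof. The paper uses pigeonhole to locate a single vertex $v$ and position $s'$ through which at least $\frac{(Lt)^{s-1}}{2(s-1)^2|\mathcal{P}_y|}$ of the $(x,y)$-paths in $\mathcal{F}'$ pass, then bounds this by $(Lt)^{s-2}$ via the goodness of subpaths to conclude $|\mathcal{P}_y|\ge \frac{Lt}{2(s-1)^2}\ge \gamma t$; you instead sum the same $(Lt)^{s-2}$ bound over all $u\in U$ and all positions and compare with $(Lt)^{s-1}/2$ to reach the identical inequality $\gamma<\frac{L}{2(s-1)^2}$. These are two phrasings of the same counting argument.
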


\begin{poc}
Let $y$ be an arbitrary vertex in $Y$. For any two vertices $v,w$ and integer $1\leq s'\leq s-1$, let $\C P_y^{s'}(v,w)$ be the collection of $(v,w)$-paths of length $s'$ each of which is a subpath of some $(x,y)$-path in $\C F'$, and let $\C P_y^{s'}(v)\subseteq \C F'$ be the collection of $(x,y)$-paths each of which contains a path from $\C P_y^{s'}(x,v)$ as a subpath.

    By the definition of $\C F'$ (in particular (ii)), there are at least $(Lt)^{s-1}/2$ $(x,y)$-paths in $\C F'$. By the maximality of $\C P_y$, for any $(x,y)$-path $P$ in $\C F'$, there exists an $(x,y)$-path $P'\in \C P_y$ ($P'$ and $P$ could be the same path) such that $P'$ internally intersects with $P$. %Let $x_0$ be the intersecting vertex closest to $x$ (in both $P$ and $P'$). Then $P[x,x_0]$ and $P'[x,x_0]$ are internally vertex-disjoint, since otherwise we would obtain an intersecting vertex of $P$ and $P'$ which is closer to $x$. 
    Therefore, by the pigeonhole principle, there exists a vertex $v\in V(\C P_y)\setminus \{x,y\}$ such that 
    \begin{equation}\label{eq:py-lb}
    |\C P_y^{s'}(v)|\geq\frac{1}{(s-1)^2|\C P_y|}\cdot \frac{(Lt)^{s-1}}{2}
    \end{equation}
    for some $1\leq s'\leq s-1$. 
Recall that each path in $\C P_y$ is admissible. Therefore, every path in  $\C P_y^{s'}(x,v)$ or  $\C P_y^{s-s'}(v,y)$ is good, which means that 
$$|\C P_y^{s'}(x,v)|\le (Lt)^{s'-1} \text{~and~} |\C P_y^{s-s'}(v,y)|\le (Lt)^{s-s'-1}.$$ 
Then we have 
\begin{equation}\label{eq:py-ub}
|\C P_y^{s'}(v)| \le |\C P_y^{s'}(x,v)|\cdot |\C P_y^{s-s'}(v,y)| \le (Lt)^{s-2}.
\end{equation}
Hence, by~(\ref{eq:py-lb}) and~(\ref{eq:py-ub}), $|\C P_y|\ge \frac{(Lt)^{s-1}}{(Lt)^{s-2}}\cdot \frac{1}{2(s-1)^2}\ge \frac{Lt}{2(s-1)^2}\ge \gamma t$ as desired.
\end{poc}

By removing some paths if necessary, we will assume that $\C P_y$ is a (not necessarily maximal) set of exactly $\gamma t$ internally vertex-disjoint induced $(x,y)$-paths in the subsequent discussion.

Next we shall construct two auxiliary graphs $H$ and $\Gamma$ on the same vertex set with $H\subseteq \Gamma$, and utilize them to find an induced $(y_1,y_2)$-path $P$ of length $2(\ell-s)$ with $y_1,y_2\in Y$. Then we will complete the proof by finding two internally vertex-disjoint paths $P_1\in \mathcal{P}_{y_1}$ and $P_2\in \mathcal{P}_{y_2}$ to form an induced $C_{2\ell}$ with $P$ in $G$. For this purpose, we need the following theorem due to Fox, Nenadov and Pham \cite{fox2024largest}.

\begin{theorem}[\cite{fox2024largest}]\label{thm:fnp}
    For any tree $T$ and $c'>0$, there exists $C'>1$ such that the following holds. Given a $(c',t')$-sparse graph $\Gamma$ on $n$ vertices, for some $t'$, every subgraph $H\subseteq \Gamma$ with at least $C't'n$ edges contains a copy of $T$ which is induced in $\Gamma$.
\end{theorem}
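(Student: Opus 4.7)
My plan is to prove Theorem~\ref{thm:fnp} by induction on $k = |V(T)|$, via a greedy embedding of $T$ into a high-minimum-degree subgraph $H' \subseteq H$. The core structural input is the following consequence of $(c',t')$-sparseness, obtained by a direct double-counting against the sparseness inequality: for every set $W \subseteq V(\Gamma)$ with $|W|\ge t'$, at most $t'$ vertices $v$ satisfy $|N_\Gamma(v)\cap W| > (1 - c'/2)|W|$. In words, the ``nearly-complete'' set to any given large $W$ is tiny; this is precisely what allows one to preserve non-edges of $T$ as non-edges in $\Gamma$ during the embedding.

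First I would preprocess $H$ by iteratively deleting vertices of degree less than $C't'/2$; since $H$ has average degree $\ge 2C't'$, a nonempty subgraph $H'\subseteq H$ with $\delta(H') \ge C't'/2$ survives. Root $T$ and order $V(T) = \{v_1,\dots,v_k\}$ along a DFS so that each $v_i$ with $i\ge 2$ has a unique predecessor $v_{p(i)}$ with $v_{p(i)} \sim_T v_i$. I would then embed $v_i \mapsto u_i$ one at a time, requiring $u_i \in N_{H'}(u_{p(i)}) \setminus \{u_1,\dots,u_{i-1}\}$ and $u_i \not\sim_\Gamma u_j$ for every $j < i$ with $v_j \not\sim_T v_i$. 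To drive the greedy, I would maintain a reservoir $R_v \subseteq V(H')$ for each unplaced vertex $v$ whose parent has already been placed: at the moment $p(v)$ is embedded at $u_{p(v)}$, initialise $R_v := N_{H'}(u_{p(v)}) \setminus \{u_1,\dots,u_{p(v)}\}$, which has size at least $\delta(H') - k \ge C't'/3$; thereafter, each time some $u_j$ is placed with $v_j \not\sim_T v$ and $v$ still unplaced, update $R_v \leftarrow R_v \setminus N_\Gamma(u_j)$. The vertex $v_i$ is then embedded by picking any $u_i \in R_{v_i}$.

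The main obstacle is controlling the shrinkage of the $R_v$'s across all subsequent updates: each $R_v$ can undergo up to $k$ non-adjacency updates before $v$ itself is embedded. Each such update shrinks the affected reservoir by at most a factor of $(1 - c'/2)$ provided $u_j$ lies outside the ``nearly-complete'' set for every surviving $R_v$; by the sparseness fact above, the union of such bad choices over all active reservoirs has size at most $kt'$, which is negligible inside the candidate pool $R_{v_j}$ as long as $|R_{v_j}| \gg kt'$. Since each $R_v$ is shrunk at most $k$ times, its final size is at least $(c'/2)^k \cdot C't'/3$. Choosing $C' = C'(c', k)$ satisfying $C' \gg (2/c')^k \cdot k$ ensures every reservoir stays well above $t'$ throughout, so the greedy selection $u_i \in R_{v_i}$ always succeeds and produces the required induced copy of $T$, closing the induction.
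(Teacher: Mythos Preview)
This theorem is quoted from~\cite{fox2024largest} and the present paper does not prove it, so there is no in-paper argument to compare against; I assess your proposal on its own.

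Your key structural observation --- that for any $W\subseteq V(\Gamma)$ with $|W|\ge t'$ fewer than $t'$ vertices have more than $(1-c'/2)|W|$ $\Gamma$-neighbours in $W$ --- is correct and is exactly the right engine, and passing to a subgraph $H'$ with $\delta(H')\ge C't'/2$ is fine. The gap is in the reservoir bookkeeping. You initialise $R_v$ only when $u_{p(v)}$ is placed and update it only for vertices embedded \emph{afterwards}; hence nothing in your scheme forces $u_i\not\sim_\Gamma u_j$ when $j<p(i)$. Already for the $3$-vertex path $v_1v_2v_3$ this fails: you place $u_1$, set $R_{v_2}=N_{H'}(u_1)\setminus\{u_1\}$, place $u_2$, then set $R_{v_3}=N_{H'}(u_2)\setminus\{u_1,u_2\}$ with \emph{no} removal of $N_\Gamma(u_1)$, and the chosen $u_3$ may well be $\Gamma$-adjacent to $u_1$, so the embedded path is not induced.

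The obvious patch --- also stripping $\bigcup_{j<p(v)}N_\Gamma(u_j)$ from $R_v$ at initialisation --- does not automatically preserve the size bound: at the time $u_j$ was selected the set $N_{H'}(u_{p(v)})$ did not yet exist, so you had no opportunity to steer $u_j$ away from its bad set, and your $(c'/2)$-per-step shrinkage estimate no longer applies to those earlier removals. A correct argument needs a stronger running hypothesis (so that whenever a new reservoir is opened it is \emph{already} known to survive the earlier non-adjacency cuts), or a non-sequential endgame that first builds large candidate sets $V_v$ for every $v\in V(T)$ and then appeals to Lemma~\ref{independent} to realise all the required non-edges simultaneously.
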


\subsection{The auxiliary graph $H$}
Given an $s$-path $P=xx_1\cdots x_{s}$ in $\C F'$, we call $x_i$ the \textit{ $i$-th vertex} of $P$. Denote by $A'$ and $B'$ the set of all the $(s-1)$-th and the $s$-th vertices of paths in $\C F'$ respectively. Note that it is possible that some vertex belongs to $A'$ and $B'$ simultaneously. Now we define a bipartite graph $H$ on vertex set $A\cup B$ with $A:=\{v_A~|~v\in A'\}$ and $B:=\{v_B~|~v\in B'\}$ as follows. For any two vertices $u_A\in A$ and $v_B\in B$, we add $u_Av_B$ into the edge set $E(H)$ if there is an $s$-path $P\in \C F'$ such that $u$ is the $(s-1)$-th vertex of $P$ and $v$ is the $s$-th vertex of $P$. For simplicity, for each $v\in A\cup B$ and $U\subseteq A\cup B$, we use $\varphi(v)$ to denote the corresponding vertex of $v$ in $A'\cup B'$ and set $\varphi(U)=\{\varphi(u)~:~u\in U\}$ in the following. 

\begin{claim}\label{lm:H-edge}
    $e(H)\ge \frac{\gamma t v(H)}{2}$.
\end{claim}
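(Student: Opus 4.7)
The plan is to lower-bound the degrees on each side of the bipartite graph $H$ separately and then combine the two bounds.

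On the $B$ side, I claim every $v_B \in B$ satisfies $d_H(v_B) \ge \gamma t$. Indeed, setting $v=\varphi(v_B) \in Y$, Claim~\ref{cl:max-disj-path} together with the normalisation following it provides $|\mathcal{P}_v| = \gamma t$ internally vertex-disjoint induced $(x,v)$-paths in $\mathcal{F}'$. Since $s \ge 3$, the $(s-1)$-th vertex of each such path is an internal vertex, so distinct paths contribute distinct $(s-1)$-th vertices $w_1,\ldots,w_{\gamma t} \in A'$. Each $w_i$ produces a distinct edge $(w_i)_A v_B \in E(H)$, giving the bound.

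On the $A$ side, I claim every $u_A \in A$ satisfies $d_H(u_A) > \tfrac{gd}{4K^{s-1}}$. Pick any $P = xx_1\cdots x_{s-1}x_s \in \mathcal{F}'$ with $x_{s-1} = \varphi(u_A)$; such a $P$ exists by the definition of $A'$. Since $P$ survived the cleanup, condition~(i) must fail for $P$ at termination, i.e., there are strictly more than $\tfrac{gd}{4K^{s-1}}$ paths of the form $P' = xx_1\cdots x_{s-1}x_s' \in \mathcal{F}'$. These paths share the $(s-1)$-th vertex $\varphi(u_A)$ but have pairwise distinct $s$-th vertices $x_s'$, each contributing a distinct edge $u_A(x_s')_B \in E(H)$.

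Combining the two bounds, $e(H) \ge \gamma t \cdot |B|$ and $e(H) > \tfrac{gd}{4K^{s-1}} \cdot |A|$. The hierarchy $1/C \ll 1/L \ll 1/\gamma \ll 1/K, 1/\ell$, together with $d \ge 2Ct$ and $g = \tfrac{1}{2}(1/4K)^{2\ell-s}$, makes $\tfrac{gd}{4K^{s-1}} \gg \gamma t$, so the second inequality also gives $e(H) \ge \gamma t \cdot |A|$. Summing the two yields $2e(H) \ge \gamma t(|A|+|B|) = \gamma t\cdot v(H)$, as required. I do not anticipate any serious obstacle in this step: the two degree bounds are essentially direct read-offs of the internal disjointness in $\mathcal{P}_v$ and of the failure of cleanup condition~(i), while the hierarchy was chosen precisely so the arithmetic for $\tfrac{gd}{4K^{s-1}} \gg \gamma t$ goes through.
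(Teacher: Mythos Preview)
Your proof is correct and follows essentially the same approach as the paper: both arguments establish the minimum-degree bound $\delta(H)\ge \gamma t$ by using the failure of cleanup condition~(i) on the $A$ side and the $\gamma t$ internally vertex-disjoint paths from Claim~\ref{cl:max-disj-path} on the $B$ side. The paper is slightly more streamlined in that it concludes $d_H(a)\ge \frac{gd}{4K^{s-1}}\ge \gamma t$ in one line via the hierarchy (using $d\ge 2Ct$) and then derives the edge bound directly from $\delta(H)\ge\gamma t$, whereas you first record the two edge inequalities separately and then combine; but the substance is identical.
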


\begin{poc}
It suffices to show that $\delta(H)\ge \gamma t$. For any vertex $a\in A$, recall that $\varphi(a)$ is the $(s-1)$-th vertex of some path in $\C F'$. By the definition of $\C F'$, for every path $P\in \C F'$, there are at least $\frac{gd}{4K^{s-1}}$ paths $P'\in \C F'$ (including $P$) sharing the first $s$ vertices with $P$. Thus, by the definition of $H$, $d_H(a)\geq \frac{gd}{4K^{s-1}}\ge \frac{g\cdot 2Ct}{4K^{s-1}}\ge\gamma t$. For each $b\in B$, $\varphi(b)$ is an endpoint of some path in $\C F'$. By~\Cref{cl:max-disj-path}, there are $\gamma t$ internally vertex-disjoint $(x,\varphi(b))$-paths in $\C F'$ and so $d_{H}(b)\ge \gamma t$. 
\end{poc}

\subsection{The auxiliary graph $\Gamma$}\label{Gamma}
Let $\varepsilon'=(s-1)\varepsilon$. The auxiliary graph $\Gamma$ is defined on the vertex set $V(\Gamma):=A\cup B$, and with the edge set $E(\Gamma)$ consisting of the following four types of edges.

\begin{enumerate}[label=(\arabic*)]
    \item For $a\in A$ and $b\in B$, if $\varphi(a)=\varphi(b)$, then we include $ab$ as a \emph{type-1} edge of $\Gamma$.

    \item For $a\in A$ and $b\in B$, if $\varphi(a)\neq \varphi(b)$ and there are at least $\varepsilon' \gamma t$ $s$-paths $P$ in $\C P_{\varphi(b)}$ containing some vertex $v\in V(P)\setminus \{x,\varphi(b)\}$ such that $v\varphi(a)\in E(G)$, then we include $ab$ as a \emph{type-2} edge of $\Gamma$.

    \item For $b_1, b_2\in B$, if for some $1\leq i\leq2$, there are at least $\varepsilon' \gamma t$ $s$-paths $P$ in $\C P_{\varphi(b_i)}$ containing some vertex $v\in V(P)\setminus \{x,\varphi(b_i)\}$ such that $v\varphi(b_{3-i})\in E(G)$, then we include $b_1b_2$ as a \emph{type-3} edge of $\Gamma$.
    
    %For $b, b'\in B$, if there are at least $\varepsilon' \gamma t$ $s$-paths $P$ in $\C P_{\varphi(b)}$ containing some vertex $v\in V(P)\setminus \{x,\varphi(b)\}$ such that $v\varphi(b')\in E(G)$, or there are at least $\varepsilon' \gamma t$ $s$-paths $P$ in $\C P_{\varphi(b')}$ containing some vertex $v\in V(P)\setminus \{x,\varphi(b')\}$ such that $v\varphi(b)\in E(G)$, then we include $bb'$ as a \emph{type-3} edge of $\Gamma$.

    \item For $x,y\in A\cup B$, if $\varphi(x)\varphi(y)\in E(G)$, then we include $xy$ as a \emph{type-4} edge of $\Gamma$.
\end{enumerate}

\noindent For $i\in [4]$, denote by $E_i$ the set of all type-$i$ edges. Note that it is possible that some $E_i$ and $E_j$ overlap, and $H$ is a subgraph of $\Gamma$ as $E(H)\subseteq E_4$.

\begin{claim}\label{lm:gamma-sparse}
The graph $\Gamma$ is $(1/4,16\beta t)$-sparse and  $|E_2|\leq 2\beta t v(\Gamma)$.
\end{claim}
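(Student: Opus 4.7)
The plan is to establish the two statements in order: first the edge-count bound $|E_2|\le 2\beta t\cdot v(\Gamma)$, which will then feed directly into the sparseness argument for $\Gamma$.

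First, to bound $|E_2|$, I would fix any $b\in B$, set $y=\varphi(b)$, and let $X_b=\{a\in A:ab\in E_2\}$, aiming to show $|X_b|\le 2\beta t$. By the definition of type-2 edges, together with the internal vertex-disjointness of the paths in $\C{P}_y$, every $a\in X_b$ satisfies $|N_G(\varphi(a))\cap I_y|\ge\varepsilon'\gamma t$, where $I_y:=V(\C{P}_y)\setminus\{x,y\}$ has size $(s-1)\gamma t$, which is $\ge\beta t$ since $\gamma\gg\beta$. The resulting lower bound $e_G(\varphi(X_b),I_y)\ge|X_b|\varepsilon'\gamma t$ is then compared with the $(1-\varepsilon,\beta t)$-sparseness upper bound $\varepsilon|X_b|(s-1)\gamma t$ (which is available once $|X_b|\ge \beta t$, and matches the lower bound exactly since $\varepsilon'=(s-1)\varepsilon$). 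To turn this equality into a strict contradiction, I would exploit the buffer between the hypothesis $|X_b|\ge 2\beta t$ and the sparseness threshold $\beta t$, for instance by isolating a subset of $X_b$ of size strictly above $\beta t$ whose density against $I_y$ strictly exceeds $\varepsilon$. Summing $|X_b|\le 2\beta t$ over $b\in B$ then gives $|E_2|\le 2\beta t\cdot|B|\le 2\beta t\cdot v(\Gamma)$.

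Next, for the $(1/4,16\beta t)$-sparseness of $\Gamma$, I would take arbitrary $S,T\subseteq V(\Gamma)$ with $|S|,|T|\ge 16\beta t$ and decompose $e_\Gamma(S,T)$ into its four type contributions. Type-1 edges form a matching between $A$ and $B$, contributing at most $2\min(|S|,|T|)$, which is negligible relative to $|S||T|$. Type-4 edges lift to $G$-edges via the map $\varphi$, injective on each of $A$ and $B$ separately; splitting $S=S_A\cup S_B$, $T=T_A\cup T_B$ along the bipartition $V(\Gamma)=A\cup B$ and bounding each of the four cross-pieces $e_G(\varphi(S_i),\varphi(T_j))$ either by $\varepsilon|S_i||T_j|$ (via sparseness of $G$, when both sides have size $\ge\beta t$) or trivially by $\beta t(|S|+|T|)$ otherwise, yields a total type-4 contribution of at most $4\varepsilon|S||T|+O(\beta t(|S|+|T|))$. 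For types 2 and 3, applying the per-vertex bound $|X_b|\le 2\beta t$ just established, and a symmetric version for type-3, gives an additional contribution of $O(\beta t(|S|+|T|))$. Finally, since $|S|,|T|\ge 16\beta t$, the additive $\beta t(|S|+|T|)$ terms absorb into $|S||T|/4$, and since $\varepsilon\ll 1$ the multiplicative $\varepsilon$-contributions stay well below $|S||T|/2$, together delivering $e_\Gamma(S,T)\le(3/4)|S||T|$.

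The main obstacle is the per-vertex bound $|X_b|\le 2\beta t$ for type-2 (and analogously type-3) edges: the parameters $\varepsilon'=(s-1)\varepsilon$ and $|I_y|=(s-1)\gamma t$ are calibrated so that the sparseness inequality is tight against the degree condition, and the argument must carefully leverage the slack between the hypothesis $|X_b|\ge 2\beta t$ and the sparseness threshold $\beta t$ — perhaps by iteratively peeling off vertices, or by passing to a slightly denser subset — to upgrade the matching bounds into a genuine contradiction rather than an equality.
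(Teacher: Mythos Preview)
Your plan is essentially the paper's: bound each $b\in B$'s type-$2$/type-$3$ degree by playing $e_G(\varphi(X_b),I_y)\ge\varepsilon'\gamma t\,|X_b|=\varepsilon|X_b||I_y|$ against the $(1-\varepsilon,\beta t)$-sparseness of $G$, then sum the four type contributions over arbitrary $M,N$ (the paper wraps types $2$ and $3$ into a single auxiliary digraph with bounded in-degree, and for type~$4$ it skips your four-way split by using $|\varphi(M)|\ge|M|/2\ge 8\beta t$ directly, since $\varphi$ is globally at most $2$-to-$1$). On the tightness you flag: it is real, the paper glosses over exactly the same issue (it asserts $|R|<\beta t$ from an inequality that only matches the sparseness bound with equality), and the $2\beta t$-versus-$\beta t$ slack does \emph{not} resolve it; the actual fix is simply to take $\varepsilon'$ anywhere strictly above $(s-1)\varepsilon$ in the hierarchy, or to note that the path count is an integer while $\varepsilon'\gamma t$ need not be, which makes the lower bound strict --- no peeling or iteration is needed.
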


\begin{poc}
    %We prove $\Gamma$ is $(1/4,16\beta t)$-sparse by contradiction, and assume that $\Gamma$ is not $(1/4,16\beta t)$-sparse. Then, there exists a pair of vertex subsets $M,N\subseteq V(\Gamma)$ with $|M|,|N|\ge 16\beta t$ such that
%    \begin{equation}\label{eq:e-lb}
%    e(M,N)>\frac{3}{4}|M||N|.
%    \end{equation}
    Let $M,N\subseteq V(\Gamma)$ be a pair of vertex subsets with $|N|\geq|M|\ge 16\beta t$. 
    %In the rest of the proof, we aim to provide an upper bound for $e(M,N)$, which will contradicts to the lower bound. 
    Recall that $\varphi$ is a mapping from $A\cup B=V(\Gamma)$ to $V(G)$ and at most two vertices in $V(\Gamma)$ (at most one from each of $A$ and $B$) can be mapped to the same vertex in $V(G)$. Therefore, we obtain
    \begin{equation}\label{eq:var-size}
    |M|\ge |\varphi(M)|\ge \frac{|M|}{2}\ge 8\beta t\quad\text{and}\quad |N|\ge |\varphi(N)|\ge \frac{|N|}{2}\ge 8\beta t.
    \end{equation}
    For $i\in [4]$, let $E_i(M,N)$ be the set of pairs $(u,v)\in M\times N$ with $uv\in E_i$, and let $e_i(M,N):=|E_i(M,N)|$. Similarly, we use 
 $e(M,N)$ to denote the number of pairs $(u,v)\in M\times N$ with $uv\in E(\Gamma)$.
 Then, 
    \begin{equation}\label{eq:e-sum}
    e(M,N)\le e_1(M,N)+e_2(M,N)+e_3(M,N)+e_4(M,N).
    \end{equation} 
In the following, we bound each $e_i(M,N)$ for $i\in [4]$ to give an upper bound of $e(M,N)$.

    For every vertex $u\in M$, there are at most two vertices $v\in N$ satisfying $\varphi(v)=\varphi(u)$. Hence,
    \begin{equation}\label{eq:e1}
        e_1(M,N)\le 2|M|\le 2|N|.
    \end{equation}
    For $e_4(M,N)$, since there are at most two vertices $u, v$ in $M$ (resp. in $N$) satisfying that $\varphi(u)=\varphi(v)$, each edge in $E_G\left(\varphi(M),\varphi(N)\right)$ is counted at most four times in $E_4(M,N)$. Hence,     
    %Moreover, since $\varphi(m)\in \varphi(M)$ and $\varphi(n)\in \varphi(N)$, we have $(\varphi(m),\varphi(n))\in E(\varphi(M),\varphi(N))$. Given this, consider a bipartite graph with two parts being $E_4(M,N)$ and $E(\varphi(M),\varphi(N))$, where the edge set is the union of all such $\{(m,n),(\varphi(m),\varphi(n))\}$. Trivially, every vertex from $E_4(M,N)$ has degree $1$. On the other hand, since at most two vertices in $\Gamma$ can map to the same vertex of $G$, the degree of any vertex in $E(\varphi(M),\varphi(N))$ is at most $4$. After a double-counting, we have
    
    \begin{equation}\label{eq:e4}
        e_4(M,N)\le 4e_G(\varphi(M),\varphi(N))\le 4\varepsilon |\varphi(M)||\varphi(N)|\le 4\varepsilon|M||N|,
    \end{equation}
    where the second inequality is from the $(1-\varepsilon,\beta t)$-sparseness of $G$ and~(\ref{eq:var-size}), and the last inequality follows from~(\ref{eq:var-size}).

    The following claim gives an upper bound of the number of edges between $M$ and $N$ of type-2 or type-3.
    \begin{claim}\label{cl:e23}
        $e_2(M,N)+e_3(M,N)\le 8\beta t |N|$.
    \end{claim}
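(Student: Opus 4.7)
The strategy is to associate to each $b\in B$ a codegree-heavy set $T_b\subseteq V(G)$ and then to bound $|T_b|$ uniformly by $O(\beta t)$ via sparsity. Set
\[
U_b := V(\mathcal{P}_{\varphi(b)})\setminus\{x,\varphi(b)\}, \qquad T_b := \{u\in V(G):|N_G(u)\cap U_b|\ge \varepsilon'\gamma t\}.
\]
Since the $\gamma t$ paths in $\mathcal{P}_{\varphi(b)}$ are internally vertex-disjoint $s$-paths with $s-1$ internal vertices each, $|U_b|=(s-1)\gamma t$, which exceeds $\beta t$ because $\gamma \gg \beta$ in our hierarchy.

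The key bridge between the edge types and the sets $T_b$ relies on internal disjointness. For a type-2 edge $ab$ with $a\in A,b\in B,\varphi(a)\ne\varphi(b)$, the $\ge\varepsilon'\gamma t$ paths in $\mathcal{P}_{\varphi(b)}$ witnessing type-2 membership contribute pairwise distinct internal vertices to $N_G(\varphi(a))\cap U_b$, so $\varphi(a)\in T_b$. Symmetrically, every type-3 edge $b_1b_2$ forces $\varphi(b_1)\in T_{b_2}$ or $\varphi(b_2)\in T_{b_1}$.

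The main obstacle is proving the uniform bound $|T_b|\le 2\beta t$. The natural attempt is to apply $(1-\varepsilon,\beta t)$-sparsity to $(T_b,U_b)$: if $|T_b|\ge 2\beta t$, then both $|T_b|,|U_b|\ge\beta t$ and sparsity gives $e_G(T_b,U_b)\le \varepsilon|T_b||U_b|=\varepsilon'\gamma t|T_b|$, while the definition of $T_b$ gives $e_G(T_b,U_b)\ge\varepsilon'\gamma t|T_b|$. The choice $\varepsilon'=(s-1)\varepsilon$ saturates these bounds at the boundary, so the contradiction has to be harvested from the strict slack in the hierarchy $1/\beta\ll\varepsilon$. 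This can be done, for instance, by first invoking Lemma~\ref{coro2} with $\varepsilon/2$ in place of $\varepsilon$ to strengthen the sparsity of $G$ to $(1-\varepsilon/2,\beta t)$ (at the cost of a constant-factor inflation of $\beta$), which converts the above equality into a strict inequality. This saturation-handling step is the subtlest part of the proof.

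With $|T_b|\le 2\beta t$ in hand, the claim follows by counting. For $e_2(M,N)$, partition ordered pairs $(u,v)\in M\times N$ with $uv\in E_2$ according to whether the $A$-endpoint is $u$ or $v$; for each $b\in B\cap(M\cup N)$, the number of $a\in A$ with $\varphi(a)\in T_b$ is at most $|T_b|\le 2\beta t$ since $\varphi|_A$ is injective, yielding $e_2(M,N)\le 2\beta t|N\cap B|+2\beta t|M\cap B|\le 4\beta t|N|$. For $e_3(M,N)$, each pair $(b_1,b_2)\in(B\cap M)\times(B\cap N)$ with $b_1b_2\in E_3$ satisfies $\varphi(b_2)\in T_{b_1}$ or $\varphi(b_1)\in T_{b_2}$, so summing $|T_{b_1}|$ over $b_1\in M$ and $|T_{b_2}|$ over $b_2\in N$ yields $e_3(M,N)\le 2\beta t|M|+2\beta t|N|\le 4\beta t|N|$. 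Adding the two estimates gives $e_2(M,N)+e_3(M,N)\le 8\beta t|N|$, as required.
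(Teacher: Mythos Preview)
Your approach is essentially the same as the paper's: the paper packages the same idea into an auxiliary digraph $D$ on $A\cup B$ (with arcs pointing into the $B$-endpoint that carries the witnessing family $\mathcal{P}_{\varphi(b)}$), bounds $d^-_D(b)$ by applying $(1-\varepsilon,\beta t)$-sparsity to the pair $(R,S)$ where $S$ is your $U_b$ and $R=\varphi(\{$in-neighbours of $b\})\subseteq T_b$, and then sums in-degrees over $M\cup N$. Your observation about saturation at $\varepsilon'=(s-1)\varepsilon$ is well taken---the paper simply writes ``$|R|<\beta t$'' without comment, relying implicitly on the slack in the hierarchy that you make explicit.
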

    \begin{poc}
        %Let $\Gamma'\subseteq \Gamma$ be the subgraph of $\Gamma$ consisting of all type-2 and type-3 edges, namely $E(\Gamma')=E_2\cup E_3$. Based on $\Gamma'$, we shall construct a digraph $D$ with $V(D)=A\cup B$.
    Construct an auxiliary digraph $D$ with $V(D)=A\cup B$ as follows. 

\begin{enumerate}[label=(\arabic*)]
    \item For every edge $ab\in E_2$ with $a\in A$ and $b\in B$, add an arc $(a,b)$ (oriented from $a$ to $b$).
    
    \item For every $b_1b_2\in E_3$ and $i\in \{1,2\}$, if there are at least $\varepsilon' \gamma t$ $s$-paths $P$ in $\C P_{\varphi(b_i)}$ containing some vertex $v\in V(P)\setminus \{x,\varphi(b_i)\}$ such that $v\varphi(b_{3-i})\in E(G)$, then add an arc $(b_{3-i},b_i)$ (the arcs $(b_1,b_2)$ and $(b_2,b_1)$ may exist simultaneously).
\end{enumerate}

        %For every type-2 edge $ab\in E(\Gamma')$ where $a\in A$ and $b\in B$, we add $(a,b)$ (oriented from $a$ to $b$) into the arc set $A(D)$. Recalling the definition of type-3 edge, for $bb'\in E_3$ where $b,b'\in B$, if the former condition in the definition holds, that is there are at least $\varepsilon' \gamma t$ $s$-paths $P$ in $\C P_{\varphi(b)}$ containing some vertex $v\in V(P)\setminus \{x,\varphi(b)\}$ such that $v\varphi(b')\in E(G)$, we add $(b',b)$ into the arc set $A(D)$. If the latter condition holds, that is there are at least $\varepsilon' \gamma t$ $s$-paths $P$ in $\C P_{\varphi(b')}$ containing some vertex $v\in V(P)\setminus \{x,\varphi(b')\}$ such that $v\varphi(b)\in E(G)$, then we add $(b,b')$ into the arc set $A(D)$. If both conditions hold, we add $(b,b')$ and $(b',b)$ simultaneously. 
        
        \noindent Note that for any vertex subset $Q\subseteq A\cup B$, the number of type-2 and type-3 edges in $\Gamma [Q]$ is at most the number of arcs in $D[Q]$. For every $v\in A\cup B$, let $d^-_D(v)$ be the in-degree of $v$ in $D$. We next show that $d^-_D(v)\leq2\beta t$, and therefore
        \begin{align*}
        e_2(M,N)+e_3(M,N)\le 2e(D[M\cup N])\le 2\sum_{v\in M\cup N} d^-_{D}(v)\le 4(|M|+|N|)\beta t\le 8\beta t |N|,
        \end{align*}
\noindent finishing the proof.

        Note that $d^-_D(a)=0$ for every $a\in A$. Thus it suffices to prove that $d^-_D(b)\leq 2\beta t$ for every $b\in B$. Fix a vertex $b\in B$, set $R:=\{\varphi(u)~|~u \text{~is an in-neighbour of~}b\}$ and $S:=\bigcup_{P\in \C P_{\varphi(b)}} \left(V(P)\setminus\{x,\varphi(b)\}\right)$. Recall that $\C P_{\varphi(b)}$ consists of $\gamma t$ internally vertex-disjoint $s$-paths. Therefore, we have 
        \[
        |S|=(s-1)|\C P_{\varphi(b)}|= (s-1)\gamma t\ge \beta t.
        \]
        On the other hand, by the construction of $D$, for each $w\in R$, there are at least $\varepsilon' \gamma t$ $s$-paths $P$ in $\C P_{\varphi(b)}$ containing some vertex $v\in V(P)\setminus \{x,\varphi(b)\}$ such that $vw\in E(G)$, implying that every vertex $w\in R$ has at least $\varepsilon' \gamma t$ neighbours in $S$. Thus we obtain that 
        \begin{equation}\label{eq:contra}
        e_G(R,S)\ge \varepsilon'\gamma t\cdot|R|=(s-1)\varepsilon\cdot \frac{|S|}{s-1}\cdot |R|=\varepsilon |S||R|.
        \end{equation}
        Since $G$ is $(1-\varepsilon, \beta t)$-sparse, we obtain that $|R|< \beta t$. Consequently, the fact that at most two vertices in $A\cup B$ are mapped to a single vertex in $R$ gives that 
        $d^-_D(b)\le 2|R|<2\beta t$. 
        \end{poc}

By~(\ref{eq:e-sum}),~(\ref{eq:e1}),~(\ref{eq:e4}) and Claim~\ref{cl:e23}, for any subsets $M,N\subseteq V(\Gamma)$ with $|N|\geq|M|\geq 16\beta t$, we get that
\begin{equation}\label{eq:e-rb} 
e(M,N)\le 2|N|+4\varepsilon |M||N|+8\beta t|N|\le \frac{3}{4}|M||N|.
\end{equation}
Thus, $\Gamma$ is $(1/4,16\beta t)$-sparse.

For the second part of the claim, using again the auxiliary  digraph $D$ in the proof of Claim~\ref{cl:e23}, we derive that $|E_2|\leq \sum_{v\in B} d^-_{D}(v)\leq 2\beta t|B|\leq 2\beta t v(\Gamma).$
\end{poc}

\subsection{Finding an induced $C_{2\ell}$ in $G$}

Recall that $H$ is a subgraph of $\Gamma$ and $v(H) = v(\Gamma)$.
Set $m:=2(\ell-s)+1$, and let $C'$ be the constant returned from Theorem~\ref{thm:fnp} with $c'=1/4$ and  $T$ being an $(m+1)$-path. 
Let $H'$ be the graph obtained from $H$ by deleting all edges which are of type-2 in $\Gamma$.
By~\Cref{lm:H-edge} and~\Cref{lm:gamma-sparse}, we have $e(H') \ge \gamma t v(H)/2 - 2\beta t v(H) \ge 16C'\beta t v(H)$, where the last equality follows from that $\gamma\gg \beta,C'$. Then, by~\Cref{lm:gamma-sparse} and Theorem~\ref{thm:fnp}, there is an $(m+1)$-path $Q$ in $H'$ which is induced in $\Gamma$. Further, since $H'$ is bipartite, there is an $(m-1)$-subpath of $Q$ with two endpoints lying in $B$. To see this, we can just delete the last (or the first) two vertices of $Q$ if its two endpoints lie in $B$, and delete the two endpoints of $Q$ if they lie in $A$. 

 The following claim shows that an induced path in $\Gamma$ retained in $H'$ implies an induced path in $G$ with some good properties.   

\begin{claim}\label{lm:2l-2s}
     Let $P=z_1z_2\cdots z_m$ be an $(m-1)$-path in $H'$ with $z_1,z_{m}\in B$, which is induced in $\Gamma$. Let $\varphi(P)$ be the induced subgraph of $G$ on $\varphi(V(P))$. Then the following hold.
    \begin{enumerate}[label=\rm{(\alph*)}]
        \item $\varphi(P)$ is an induced $(m-1)$-path in $G$;
        \item 
        For every $z\in\{z_1,z_m\}$ and every $z_i\neq z$, there are at most $\varepsilon' \gamma t$ $(x,\varphi(z))$-paths in $\C P_{\varphi(z)}$ each of which contains an internal vertex $v$ with $v\varphi(z_i)\in E(G)$;
        \item 
       For every $z\in\{z_1,z_m\}$ and every $z_i\neq z$, there is at most one $(x,\varphi(z))$-path in $\C P_{\varphi(z)}$ containing $\varphi(z_i)$ as an internal vertex;
        
        \item 
        No vertex in $V(\varphi(P))$ is joined to $x$ in $G$.      
    \end{enumerate}  
\end{claim}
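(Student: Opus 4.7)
The plan is to verify conclusions (a)--(d) in turn by unpacking the definitions of $\Gamma$, $H$, and $H'$, and using the two structural hypotheses that $P$ lies in $H'$ and is induced in $\Gamma$.

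For (a), each edge $z_iz_{i+1}$ of $P$ lies in $H\subseteq E_4$, so by the definition of type-4 edges we have $\varphi(z_i)\varphi(z_{i+1})\in E(G)$. For inducedness, observe that $\varphi|_A$ and $\varphi|_B$ are injective and that $H$ contains no type-1 edges (the $(s-1)$-th and $s$-th vertices of an induced $s$-path are distinct, so the corresponding labels in $A$ and $B$ cannot satisfy $\varphi(a)=\varphi(b)$). For any two non-adjacent vertices $z_i,z_j$ of $P$, we have $z_iz_j\notin E(\Gamma)$; in particular $z_iz_j\notin E_1\cup E_4$, which rules out $\varphi(z_i)=\varphi(z_j)$ (for opposite-side pairs) and $\varphi(z_i)\varphi(z_j)\in E(G)$ respectively.

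For (b), fix $z\in\{z_1,z_m\}\subseteq B$ and $z_i\neq z$. If $z_iz$ is not an edge of $P$ then $z_iz\notin E(\Gamma)$, so in particular $z_iz$ is neither a type-2 nor a type-3 edge; unwinding the defining threshold---applying the type-3 definition with $b_1=z,b_2=z_i$ and index $1$ when $z_i\in B$, and the type-2 definition directly when $z_i\in A$---yields the asserted bound of at most $\varepsilon'\gamma t$. If $z_iz$ is an edge of $P$, then bipartiteness of $H'$ forces $z_i\in A$, and since $z_iz\in E(H')$, the construction of $H'$ excludes $E_2$; the non-type-2 condition for $(z_i,z)$ is exactly the conclusion. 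Part (c) is then immediate: $\C P_{\varphi(z)}$ consists of $\gamma t$ pairwise internally vertex-disjoint paths, so any fixed vertex---and in particular $\varphi(z_i)$, which differs from $x$ and $\varphi(z)$ by the preceding observations---can be internal to at most one of them. Part (d) follows from the fact that each $\varphi(z_i)\in A'\cup B'$ is the $(s-1)$-th or $s$-th vertex of some induced $s$-path in $\C F'$ starting from $x$; since $s\geq 3$ and the path is induced, $\varphi(z_i)$ is at graph distance at least $2$ from $x$, so $x\varphi(z_i)\notin E(G)$.

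The only genuinely delicate point is (b), where one must correctly match the two sub-cases (whether $z_iz$ is or is not an edge of $P$) with the two sources of sparseness that $P$ inherits---the type-2 exclusion built into $H'$, and the $\Gamma$-inducedness on non-consecutive pairs---and further split the non-edge case on whether $z_i$ lies in $A$ (triggering the type-2 threshold) or in $B$ (triggering the appropriate index of the type-3 threshold). Parts (a), (c), and (d) amount essentially to bookkeeping with the definitions.
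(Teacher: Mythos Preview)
Your proposal is correct and follows essentially the same approach as the paper's proof: in each part you unwind the relevant edge-type definition in $\Gamma$ and use either the $\Gamma$-inducedness of $P$ (for non-consecutive pairs) or the exclusion of $E_2$ in $H'$ (for consecutive pairs). The paper argues (b) by contradiction in one sweep rather than splitting into your two cases, but the content is identical.
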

\begin{poc}
Note that, for every $i\in [m-1]$, $\varphi(z_i)\varphi(z_{i+1})$ is an edge of $G$ as $z_iz_{i+1}\in E(H')$ is an edge of type-4. So, to prove $(a)$, it suffices to show that $\varphi(z_i)\neq \varphi(z_j)$ and $\varphi(z_i)\varphi(z_j)\notin E(G)$ for every $i,j\in[m]$ with $|i-j|\ge 2$.  Actually, if this does not hold for some $i,j$ with $|i-j|\geq2$, then $z_iz_j$ is either an edge of type-1 in $\Gamma$ (if $\varphi(z_i)=\varphi(z_j)$) or an edge of type-4 in $\Gamma$ (if $\varphi(z_i)\varphi(z_j)\in E(G)$), contradicting that $P$ is an induced path in $\Gamma$.

Suppose that $(b)$ is false for some $z\in\{z_1,z_m\}$ and $z_i\neq z$. Since $z_1,z_m$ lie in $B$, by the construction of $\Gamma$, $z_iz$ is a type-2 or type-3 edge in $\Gamma$. Since $H'$ is bipartite and contains no edge of type-2, we conclude that $z_iz\notin E(P)$; however, $z_iz\in E(\Gamma)$, contradicting that $P$ is an induced path in $\Gamma$.

%For $(c)$, we only prove the case for $\C P_{\varphi(z_1)}$. The statement with $\C P_{\varphi(z_m)}$ will follow from a similar argument. One can notice that for the vertex $\varphi(z_1)$, the conclusion holds trivially, since no path in $\C P_{\varphi(z_1)}$ can contain $\varphi(z_1)$ as an internal vertex. 
Clearly, $(c)$ holds as both $\C P_{\varphi(z_1)}$ and $\C P_{\varphi(z_m)}$ consist of internally vertex-disjoint paths. For $(d)$, 
recalling that $\C F'$ is a set of induced $s$-paths starting from $x$ with $s\ge 3$, and $\varphi(z_i)$ is the $(s-1)$-th or the $s$-th vertex of some $s$-path in $\C F'$, thus $\varphi(z_i)x\notin E(G)$.
\end{poc}

Fix an $(m-1)$-path $P=z_1z_2\cdots z_m$ in $H'$ with $z_1,z_m\in B$ which is induced in $\Gamma$. Then, by~\Cref{lm:2l-2s}, $\varphi(P)=\varphi(z_1)\varphi(z_2)\cdots \varphi(z_{m})$ is an induced $(m-1)$-path in $G$ satisfying properties $(b),(c)$ and $(d)$. The following claim completes the proof of~\Cref{lem: main}.

\begin{figure}
    \centering
    \label{fig:pf-idea}
    \begin{tikzpicture}
    \draw [bend left=30,line width=0.6pt,opacity=0.3] (6.5,-2) to (-1,0);
     \draw [bend left=15,line width=0.6pt,opacity=0.3] (6.5,-2) to (-1,0);
     \draw [bend right=15,line width=0.6pt,opacity=0.3] (6.5,-2) to (-1,0);
    \draw [bend right=30,line width=0.6pt,opacity=0.3] (6.5,2) to (-1,0);
    \draw [bend right=15,line width=0.6pt,opacity=0.3] (6.5,2) to (-1,0);
    \draw [bend right=15,line width=0.6pt,opacity=0.3] (-1,0) to (6.5,2);
 %   \draw[opacity=0.3,line width=0.6pt] (6.5,2) arc (10:20:2);
   \draw[line width=0.78pt,color=blue] (-1,0)--(6.5,-2);
    \draw[line width=0.78pt,color=blue] (-1,0)--(6.5,2);
 %   \draw[decorate, decoration={snake, amplitude=3mm, segment length=7mm}, line width=0.78pt] (6.5,-2) -- (6.5,2);
  % \draw [bend right=15,line width=0.78pt] (6.5,2) to (6.8,2.2);

   \draw[line width=0.78pt, domain=-9.8:9.8,samples=150] plot ({0.5*sin((\x-1.57) r)+6.05},0.2*\x) node[right]{};
    
    \node[scale=1.2] at (-1.3,0) {$x$};
    \node[scale=1.1] at (7.3,0) {$\varphi(P)$};
    \node[scale=1.1] at (7.3,2.1) {$\varphi(z_1)$};
    \node[scale=1.1] at (7.3,-2.1) {$\varphi(z_m)$};
    \node[scale=1] at (3.1,1.37) {$P_1$};
    \node[scale=1] at (3.1,-1.37) {$P_2$};
    \draw [fill=black] (-1,0) circle (1.6pt);
    \draw [fill=black] (6.5,2) circle (1.6pt);
    \draw [fill=black] (6.5,-2) circle (1.6pt);
    \end{tikzpicture}
    \caption{An illustration of finding an induced $C_{2\ell}$}
\end{figure}
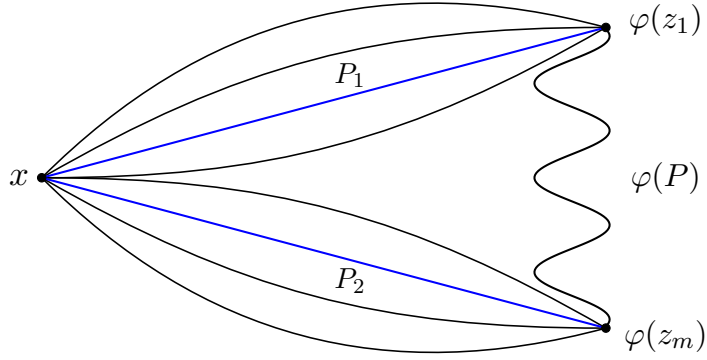

\begin{claim}\label{lm:glue}
    There exist $P_1\in \C P_{\varphi(z_1)}$ and $P_2\in \C P_{\varphi(z_m)}$ such that $P_1\cup P_2\cup \varphi(P)$ is an induced $C_{2\ell}$ in $G$.
\end{claim}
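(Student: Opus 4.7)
The plan is to proceed in two stages. First, prune each of $\mathcal{P}_{\varphi(z_1)}$ and $\mathcal{P}_{\varphi(z_m)}$ down to a large subfamily of candidates that interact nicely with the spine $\varphi(P)$. Second, pair up $P_1,P_2$ from what remains so that their internal sets are disjoint and span no edge of $G$. Combined with the fact that each path is induced, that $\varphi(P)$ is induced (property $(a)$), and that $x$ is not joined to any vertex of $\varphi(P)$ (property $(d)$), such a pair closes up into an induced $C_{2\ell}$ of total length $s+(m-1)+s=2\ell$.

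For the pruning, call $Q\in \mathcal{P}_{\varphi(z_1)}$ \emph{bad} if some internal vertex $v$ of $Q$ satisfies $v=\varphi(z_i)$ or $v\varphi(z_i)\in E(G)$ for some $i\neq 1$; these are exactly the obstructions that would create a chord or vertex repetition between the copy of $Q$ serving as $P_1$ and $\varphi(P)$. Property $(c)$ bounds the number of bad $Q$ of the first type by $m-1$ (at most one per $\varphi(z_i)$), while property $(b)$ bounds the second type by $(m-1)\varepsilon'\gamma t$. Using $m\le 2\ell$ and $\varepsilon'=(s-1)\varepsilon\ll 1/\ell$, the total is at most $\gamma t/2$, so a subfamily $\mathcal{G}_1\subseteq \mathcal{P}_{\varphi(z_1)}$ with $|\mathcal{G}_1|\ge \gamma t/2$ survives. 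Pruning $\mathcal{P}_{\varphi(z_m)}$ symmetrically yields $\mathcal{G}_2$ with $|\mathcal{G}_2|\ge \gamma t/2$. After this pruning, the only remaining obstructions to $P_1\cup \varphi(P)\cup P_2$ being induced are a shared internal vertex of $P_1,P_2$ or a $G$-edge between their internal sets.

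The main obstacle is to produce such a compatible pair. Let $I(Q)\subseteq V(Q)$ denote the internal set of $Q$, and set $V_j:=\bigcup_{Q\in \mathcal{G}_j}I(Q)$ for $j=1,2$; since the paths in $\mathcal{G}_j$ are internally disjoint, $|V_j|=(s-1)|\mathcal{G}_j|\ge (s-1)\gamma t/2\ge \beta t$. Each vertex in $V_1\cap V_2$ lies in a unique path of each side and hence contributes at most one pair $(P_1,P_2)\in \mathcal{G}_1\times \mathcal{G}_2$ with overlapping interiors, giving at most $|V_2|=(s-1)|\mathcal{G}_2|$ such forbidden pairs. Each edge of $G$ between $V_1$ and $V_2$ contributes to at most one pair whose interiors span a chord, so by the $(1-\varepsilon,\beta t)$-sparseness of $G$ the number of such pairs is at most $e_G(V_1,V_2)\le \varepsilon|V_1||V_2|=\varepsilon(s-1)^2|\mathcal{G}_1||\mathcal{G}_2|$. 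With $\varepsilon\ll 1/\ell$ and $|\mathcal{G}_j|\gg s$, the sum of these two bounds is strictly less than $|\mathcal{G}_1||\mathcal{G}_2|$, so a valid pair $(P_1,P_2)$ exists, and $P_1\cup \varphi(P)\cup P_2$ is the desired induced $C_{2\ell}$.
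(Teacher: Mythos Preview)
Your proposal is correct and follows essentially the same approach as the paper: prune each $\mathcal{P}_{\varphi(z_j)}$ using properties $(b)$ and $(c)$ to eliminate conflicts with $\varphi(P)$, then use the $(1-\varepsilon,\beta t)$-sparseness on the internal vertex sets $V_1,V_2$ to find a compatible pair. The only cosmetic difference is that the paper handles the vertex-overlap and chord cases in one stroke (since $s\ge 3$ forces any shared internal vertex to induce an edge between $I(P_1)$ and $I(P_2)$), whereas you count the two types of bad pairs separately; both lead to the same conclusion.
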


\begin{poc}
For $z\in\{z_1,z_m\}$, let $\C P'_{\varphi(z)}$ be the set obtained from $\C P_{\varphi(z)}$ by removing all the $s$-paths each of which contains some $\varphi(z_i)$ or a neighbour of some $\varphi(z_i)$ as an internal vertex. Then for any $s$-path $P'\in \C P'_{\varphi(z)}$, $P'\cup \varphi(P)$ is an induced $(2\ell-s)$-path in $G$. By $(b)$ and $(c)$ in~\Cref{lm:2l-2s}, the number of $s$-paths removed from $\C P_{\varphi(z)}$ is at most $(m-1)\varepsilon' \gamma t+ (m-1)\le \frac{\gamma t}{2}$, and thus $|\C P'_{\varphi(z)}|\ge \gamma t/2$. 

We next show that there exist $P_1\in \C P'_{\varphi(z_1)}$ and $P_2\in \C P'_{\varphi(z_m)}$ such that no edge in $G$ connects their internal sets (which also implies that their internal sets are disjoint), and then $P_1\cup P_2\cup \varphi(P)$ is an induced $C_{2\ell}$ as desired. 
 
 Let $V_1$ and $V_2$ be the collection of all the internal vertices of paths in $\C P'_{\varphi(z_1)}$ and $\C P'_{\varphi(z_m)}$ respectively.
 Then $|V_1| = (s-1)|\C P'_{\varphi(z_1)}| \ge (s-1) \frac{\gamma t}{2} \ge \beta t$, and $|V_2|= (s-1)|\C P'_{\varphi(z_m)}| \ge \beta t$.
 Suppose to contrary that for any $P_1\in \C P'_{\varphi(z_1)}$ and  $P_2\in \C P'_{\varphi(z_m)}$, there exists an edge connecting their internal sets. Then we have
 \[
 e(V_1,V_2)\ge |\C P'_{\varphi(z_1)}| \cdot |\C P'_{\varphi(z_m)}| = \frac{1}{(s-1)^2}|V_1||V_2| >\varepsilon |V_1||V_2|,
 \]
 which contradicts the $(1-\varepsilon, \beta t)$-sparseness of $G$.
\end{poc}
 
\begin{comment}
 Let $V_1$ (resp. $V_2$) be the union of all internal sets of paths in $\C P'_{\varphi(z_1)}$ (resp. $\C P'_{\varphi(z_m)}$). Then, $|V_i|\ge (s-1)\gamma t/2\ge \beta t$ for $i\in \{1,2\}$. By $(1-\varepsilon,\beta t)$-sparseness of $G$, we get 
 \[
 e(V_1,V_2)\le \varepsilon |V_1||V_2|.
 \]
By pigeonhole principle, there is an interval set $I$ of some path $P_1\in \C P'_{\varphi(z_1)}$ such that the number of pairs $(v_1,v_2)\in I\times V_2$ with $v_1v_2\in E(G)$ is at most
\[
\frac{\varepsilon |V_1||V_2|}{|\C P'_{\varphi(z_1)}|}=(s-1)\varepsilon |V_2|=(s-1)^2\varepsilon |\C P'_{\varphi(z_m)}|.
\]
This implies that there are at most $(s-1)^2\varepsilon |\C P'_{\varphi(z_m)}|$ vertices in $V_2$, denoted by the vertex set $F$, connecting to some vertex in $I$. We remove all the $s$-paths in $\C P'_{\varphi(z_m)}$ containing some $w\in F$ as internal vertex. Since paths in $\C P'_{\varphi(z_m)}$ are internally vertex-disjoint, we remove at most 
$$(s-1)^2\varepsilon |\C P'_{\varphi(z_m)}|<|\C P'_{\varphi(z_m)}|$$ 
paths from $\C P'_{\varphi(z_m)}$. Thus, there is an $s$-path $P_2\in \C P'_{\varphi(z_m)}$ containing no internal vertex connecting to $I$ (which is the internal set of $P_1$). We are done.
\end{comment}

%By the construction of $A$ and $B$, if $\varphi(x)=\varphi(y)$ holds for some $x,y\in A\cup B$, then $x,y$ belong to $A$ and $B$ respectively.

%Construct an auxiliary bipartite graph $H$ with $V(H)=A\cup B$
\section{Concluding remarks} \label{sec:remark}
In this paper, we proved that an induced analogue of Bondy-Simonovits theorem that every $(c,t)$-sparse graph $G$ with at least $C_{c,\ell} t^{1-1/\ell}n^{1+1/\ell}$ edges contains an induced copy of $C_{2\ell}$. It would be interesting to establish, if true, the corresponding supersaturation result.

\begin{prob}\label{Q:1}
    For every $c>0$ and integer $\ell$,  do there exist $C$ and $C'$ such that if an $n$-vertex graph $G$ is $(c,t)$-sparse for some $t$, and has at least $C t^{1-1/\ell}n^{1+1/\ell}$ edges, then $G$ contains at least $C' t^{2\ell -2} n^2 $ induced copies of $C_{2\ell}$?
\end{prob}

 \section*{Acknowledgment}
This work was initiated at the 2nd ECOPRO Student Research Program in the summer of 2024.

\bibliographystyle{abbrv}
\bibliography{cycle}

\end{document}